\documentclass[english]{amsart}

\usepackage{amsfonts,amsmath,amssymb,color,amsthm,mathrsfs}
\usepackage[T1]{fontenc}
\usepackage[english]{babel}
\usepackage{color}
\usepackage{hyperref}
\usepackage[T1]{fontenc}
\usepackage[all]{xy}
\usepackage{mathtools}
\newtheorem{theorem}[equation]{Theorem}
\newtheorem{lemma}{Lemma}[section]
\newtheorem{corollary}[lemma]{Corollary}
\newtheorem{proposition}[lemma]{Proposition}

\theoremstyle{definition}

\theoremstyle{remark}
\newtheorem{remark}[lemma]{Remark}
\newtheorem{example}[lemma]{Example}

\newcommand\dashmapsto{\mapstochar\dashrightarrow}

\newcommand\Bir{{\mathrm{Bir}}}

\newcommand\Q{{\mathbb Q}}

\newcommand\End{{\mathrm{Rat}}}
\newcommand\Res{{\mathrm{Res}}}
\newcommand\A{{\mathbb A}}

\newcommand\C{{\mathbb C}}

\newcommand\p{{\mathbb P}}

\newcommand\M{{\mathcal{M}}}
\newcommand\Cal{{\mathcal{C}}}

\newcommand\Proj{{\mathbb P}}
\newcommand\PGL{{\mathrm{PGL}}}

\newcommand\Aut{{\mathrm{Aut}}}

\newcommand\tr{\hbox to 1mm  {${}^t \!  $} }

\newcommand{\nc}{\newcommand}
\nc{\beq}{\begin{equation}}
\nc{\eeq}{\end{equation}}
\nc{\what}[1]{{\,\widehat{\!#1\!}\,}}
\nc{\wtilde}[1]{{\,\widetilde{\!#1\!}\,}}
\nc{\rmE}{{\rm E}}
\nc{\MW}{{Mordell--\kern-.12exWeil}}

\title[Quadratic maps with a marked periodic point of small order]{Moduli
   spaces of quadratic rational maps with a marked periodic point
   of small order}
\thanks{The first-named author gratefully acknowledges support by the
   Swiss National Science Foundation Grant
   "Birational Geometry" PP00P2\_128422 /1.
   The third-named author gratefully acknowledges support by the
   US National Science Foundation under grant DMS-1100511.}

\author{J\'er\'emy Blanc}
\address{J\'er\'emy Blanc, Universit\"{a}t Basel,
   Mathematisches Institut, Rheinsprung $21$, CH-$4051$ Basel, Switzerland.}
\email{jeremy.blanc@unibas.ch}

\author{Jung Kyu Canci}
\address{Jung Kyu Canci, Universit\"{a}t Basel,
   Mathematisches Institut, Rheinsprung $21$, CH-$4051$ Basel, Switzerland.}
\email{jungkyu.canci@unibas.ch}

\author{Noam D. Elkies}
\address{Noam D. Elkies, Department of Mathematics,
   Cambridge, MA 02138, USA.}
\email{elkies@math.harvard.edu}

\begin{document}
\maketitle

\centerline{\today}

\begin{abstract}
The surface corresponding to the moduli space of quadratic endomorphisms
of $\p^1$ with a marked periodic point of order~$n$ is studied.
It is shown that the surface is rational over $\mathbb{Q}$ when
$n\le 5$ and is of general type for $n=6$.

An explicit description of the $n=6$ surface lets us find several
infinite families of quadratic endomorphisms $f: \p^1 \to \p^1$
defined over $\mathbb{Q}$ with a rational periodic point of order~$6$.
In one of these families, $f$ also has a rational fixed point,
for a total of at least $7$ periodic and $7$ preperiodic points.
This is in contrast with the polynomial case, where it is conjectured
that no polynomial endomorphism defined over $\mathbb{Q}$ admits
rational periodic points of order $n>3$.
\end{abstract}

\section{introduction}
A classical question in arithmetic dynamics concerns
periodic, and more generally preperiodic, points of a rational map
(endomorphism) $f\colon \p^1(\Q)\to \p^1(\Q)$.  A point $p$ is said
to be \emph{periodic of order~$n$} if $f^n(p)=p$ and if $f^i(p)\neq p$
for $0<i<n$. A point $p$ is said to be \emph{preperiodic} if there exists
a non-negative integer $m$ such that the point $f^m(p)$ is periodic.

In \cite[Conjecture 2]{FPS}, it was conjectured that if $f$\/ is
a polynomial of degree~$2$ defined over~$\mathbb{Q}$, then $f$\/ admits
no rational periodic point of order $n>3$.  This conjecture, also
called Poonen's conjecture (because of the refinement made in \cite{Poo},
see \cite{HuIn}), was proved in the cases $n=4$ \cite[Theorem 4]{Mor}
and $n=5$ \cite[Theorem 1]{FPS}. Some evidence for $n=6$ is given
in \cite[Section 10]{FPS}, \cite{Sto} and \cite{HuIn}.
The bound $n>3$ is needed because the polynomial maps $f$\/ of degree~$2$
constitute an open set in~$\A^3$, and for any $p_i$ the condition
$f(p_i) = p_{i+1}$ cuts out a hyperplane in this $\A^3$.

In this article, we study the case where $f$\/ is not necessarily
a polynomial but a rational map of degree~$2$.  Here the space
of such maps is an open set in~$\p^5$,
and again each condition $f(p_i) = p_{i+1}$ cuts out a hyperplane.
Hence the analog of Poonen's conjecture would be that
there is no map defined over~$\Q$
with a rational periodic point of order $n>5$. However, we show that
in fact there are infinitely many pairs $(f,p)$, even up to
automorphism of $\p^1$, such that $f\colon \p^1(\Q)\to \p^1(\Q)$
is a rational map of degree $2$ defined over~$\Q$
and $p$ is a rational periodic point of order~$6$.
We do this by studying the structure of the algebraic variety
parametrising such pairs, called as usual the moduli space.

The study of the moduli spaces considered in this article is also
motivated by some more general facts.
For example Morton and Silverman~\cite{MoS} stated
the so-called Uniform Boundedness Conjecture.
The conjecture asserts that for every number field~$K$\/
the number of preperiodic points in $\p^N(K)$
of a morphism \hbox{$\Phi\colon \p^N\to\p^N$} of degree $d \geq 2$
defined over~$K$ is bounded, by a number depending
only on the integers $d,N$\/ and on the degree $D = [K:\Q]$
of the extension $K/\Q$.
It seems very hard to settle this conjecture, even in the case
$(N,D,d) = (1,1,2)$.  As usual, the way to solve a problem in number theory
starts with the study of the geometrical aspects linked to the
problem. The moduli spaces studied in this article are some geometrical
objects naturally related with the Uniform Boundedness Conjecture.

\bigskip

We next give a more precise structure to our moduli spaces.
All our varieties will be algebraic varieties defined over the
field $\Q$ of rational numbers, and thus also over any field of
characteristic zero.

We denote by $\End_d$ the algebraic variety parametrising all
endomorphisms (rational functions) of degree~$d$ of $\p^1$; it
is an affine algebraic variety of dimension $2d+1$. The algebraic
group $\Aut(\p^1)=\PGL_2$  of automorphisms of $\p^1$ acts by conjugation
on $\End_d$.  J.~Milnor~\cite{Mil.1} proved that the moduli space
M$_2(\mathbb{C})={\rm Rat}_2(\mathbb{C})/{\rm PGL}_2(\mathbb{C})$
is analytically isomorphic to $\mathbb{C}^2$.
J.H.\;Silverman~\cite{Sil.1} generalised this result:
for each positive integer $d$,
the quotient space M$_d={\rm Rat}_d/{\rm PGL}_2$ exists
as a geometric quotient scheme over $\mathbb{Z}$
in the sense of Mumford's geometric invariant theory, and
${\rm Rat}_2/{\rm PGL}_2$ is isomorphic to $\mathbb{A}^2_{\mathbb{Z}}$.
More recently, A.\;Levy~\cite{Levy} proved that the quotient space
M$_d$ is a rational variety for all positive integers $d$.

Let $n\ge 1$ be an integer, and let $\widetilde{\M}_d(n)$ be the
subvariety of $\End_d \times (\p^1)^n$ given by the points $(f,p_1,\dots,p_n)$
such that $f(p_i)=p_{i+1}$ for $i=1,\dots,n-1$, $f(p_n)=p_1$ and
all points $p_i$ are distinct (note that here
  $(f,p_1)$ carries the same information as $(f,p_1,...,p_n)$).
The variety $\widetilde{\M}_d(n)$ has dimension $2d+1$.
For $n\ge 2$, $\widetilde{\M}_d(n)$ is moreover affine,
since $\End_d$ is affine and the subset of $(\p^1)^n$ corresponding
to $n$-uples of pairwise distinct points of $\p^1$ is also affine.

The group $\PGL_2$ naturally acts on $\widetilde{\M}_d(n)$, and
M.\;Manes~\cite{Manes} proved that the quotient $\widetilde{\M}_d(n)/\PGL_2$
exists as a geometric quotient scheme.\footnote{In \cite{Manes},
points of "formal period"~$n$ are considered, so our varieties
$\widetilde{\M}_d(n)$ are $\PGL_2$-invariant open subsets of the
varieties called by the same name in \cite{Manes}.}
We will denote by $\M_d(n)$ the quotient surface $\widetilde{\M}_d(n)/\PGL_2$,
which is an affine variety for $n>1$ and $d=2$.

In \cite[Theorem 4.5]{Manes}, it is shown that the surfaces $\M_2(n)$
are geometrically irreducible for every $n>1$ (a fact which is also
true for $n=1$), but not much else is known about these surfaces.

The closed curve $\mathcal{C}_2(n)\subset \M_2(n)$ corresponding
to periodic points of polynomial maps is better known; it is rational
for $n\le 3$, of genus $2$ for $n=4$, and of genus $14$ for $n=5$,
and its genus rapidly increases with~$n$. Bousch studied these curves from
an analytic point of view in his thesis \cite{Bou} and Morton from
an algebraic point of view in \cite{Mor96}. See \cite[Chapter 4]{Sil.2}
for a compendium of the known results on $\mathcal{C}_2(n)$.

Note that $\M_d(n)$ has an action of the automorphism $\sigma_n$
of order~$n$ which sends the class of $(f,p_1,p_2,\dots,p_n)$ to
the class of $(f,p_2,\dots,p_{n},p_1)$. For $n\ge 5$, the quotient
surface $\M_2(n)/\langle\sigma_n\rangle$ parametrises the set of
orbits of size~$n$ of endomorphisms of $\p^1$ of degree $2$ (see
Lemma~\ref{Lemm:ProjectionAk}).  One approach to Poonen's conjecture,
carried out in \cite{FPS,Mor,Sto} and elsewhere, is to study
the quotient curve $\mathcal{C}_2(n)/\langle\sigma_n\rangle$,
which has a lower genus than $\mathcal{C}_2(n)$.

\smallskip

The aim of this article is to understand the geometry of $\M_2(n)$
and $\M_2(n)/\langle\sigma_n\rangle$ for small~$n$, i.e.\ to describe
the birational type of the surfaces and to determine whether
they contain rational points.
Our main result is the following:
\begin{theorem}\label{Thm:Main}$\ $
\begin{enumerate}
\item
For $1\le n\le 5$, the surfaces $\M_2(n)$ and $\M_2(n)/\langle\sigma_n\rangle$
are rational over $\mathbb{Q}$.
\item
The surface $\M_2(6)$ is an affine smooth surface, birational to
a projective surface of general type, whereas $\M_2(6)/\langle\sigma_6\rangle$
is rational over $\mathbb{Q}$.
\item
 The set $\M_2(6)(\Q)$ of $\Q$-rational points of $\M_2(6)$ is
infinite.\end{enumerate}
\end{theorem}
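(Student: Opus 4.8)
The goal of part~(3) is to show that $\M_2(6)(\Q)$ is infinite, even though by part~(2) the surface $\M_2(6)$ is of general type. The plan is to exhibit one geometrically irreducible curve $C$ defined over $\Q$ and contained in $\M_2(6)$ with $C(\Q)$ infinite; then $C(\Q)\subseteq\M_2(6)(\Q)$ gives the conclusion. A general-type surface admits no fibration by curves of genus $\le 1$ and (conjecturally) has no Zariski-dense set of rational points, so such a $C$ must have geometric genus $0$ or $1$: concretely one looks either for a curve of geometric genus $0$ carrying a rational point (its normalisation is then $\cong\p^1_\Q$), or for an elliptic curve over $\Q$ of positive Mordell--Weil rank. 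The essential point is that the existence of such a $C$ is \emph{not} formal --- a surface of general type may have only finitely many rational points --- so it has to be produced by hand, and this is precisely where the explicit description of $\M_2(6)$ underlying the proof of parts~(1)--(2) is used.

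That description is obtained by normalising the $\PGL_2$-action: after conjugation one may take the marked $6$-cycle to be $p_1=0\mapsto p_2=\infty\mapsto p_3=1\mapsto p_4\mapsto p_5\mapsto p_6\mapsto 0$, so that a point of $\widetilde{\M}_2(6)$ is recorded by the triple $(p_4,p_5,p_6)$ together with the two binary quadratics defining $f$. The five conditions $f(p_i)=p_{i+1}$ for $i=1,\dots,5$ are linear in $f\in\p^5$ and generically determine $f$ as an explicit rational function of $(p_4,p_5,p_6)$; the remaining return condition $f(p_6)=0$ then becomes a single polynomial relation $G(p_4,p_5,p_6)=0$. Thus $\M_2(6)$ is isomorphic to a dense open subscheme of the affine surface $\{G=0\}\subset\A^3$ (with the open conditions that $f$ has degree exactly $2$ and the $p_i$ are pairwise distinct), and $\sigma_6$ acts on it by an explicit birational transformation of order $6$. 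Since $\M_2(6)/\langle\sigma_6\rangle$ is rational over $\Q$ by part~(2), the quotient has a Zariski-dense set of rational points; the difficulty is that their fibres in $\M_2(6)$ are $\Z/6\Z$-torsors which are generically non-split, so they cannot simply be lifted.

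With $G$ in hand one searches $\{G=0\}$ for rational points and for an extra algebraic relation that they satisfy, equivalently one intersects $\{G=0\}$ with loci cut out by natural dynamical conditions on the pair $(f,p_1)$: marking one of the three fixed points $q$ of $f$ (i.e.\ passing to the natural $3$-to-$1$ cover of $\M_2(6)$) and imposing one divisorial condition on that cover, such as prescribing the multiplier of $q$ or forcing $q$ to be a multiple fixed point; marking a critical point of $f$; or imposing a coincidence among the $p_i$, the fixed points and the critical points. Each such choice produces an explicit curve $\widehat C$ on a cover of $\M_2(6)$; one computes its geometric genus from the equations (passing to the normalisation, or by a Riemann--Hurwitz count over the $\sigma_6$-quotient) and retains those for which the genus is $0$ or $1$. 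In the genus-$0$ case it then suffices to spot one rational point on $\widehat C$ to identify $\widehat C\cong\p^1_\Q$; in the genus-$1$ case one puts $\widehat C$ into Weierstrass form over $\Q$ and exhibits a rational point of infinite order, checked by computing a few multiples or the reductions modulo several small primes. Pushing $\widehat C$ down to $\M_2(6)$ gives a one-parameter family $(f_s,p_s)$, $s\in\Q$, defined over $\Q$, and one finishes by verifying that the induced map $\p^1\dashrightarrow\M_2(6)$ is non-constant --- for instance that the multiplier of the marked $6$-cycle varies with $s$ --- so that infinitely many distinct points of $\M_2(6)(\Q)$ are obtained. The family in which $f$ moreover carries a rational fixed point, mentioned in the abstract, comes from the first type of condition, where $q$ is a genuinely marked rational point along the whole curve.

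The main obstacle is not any single computation but the search itself: because $\M_2(6)$ is of general type there is no a priori guarantee that it contains a curve with infinitely many rational points, and no algorithm certain to produce one, so the crux of the argument is the (computer-assisted) identification, inside the explicit model, of a slice of geometric genus $\le 1$, followed in the genus-$1$ case by a proof that its rank is positive. Once the right curve has been singled out, the remaining verifications --- its genus, the rationality or non-torsion of an exhibited point, and the non-constancy of the resulting family of quadratic maps --- are routine given the explicit equations.
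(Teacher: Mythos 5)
Your proposal correctly identifies the logical skeleton of part~(3) --- since $\M_2(6)$ is of general type, one cannot argue formally, and the only route is to exhibit by hand an explicit curve of geometric genus $0$ or $1$, defined over $\Q$ and lying on $\M_2(6)$, carrying infinitely many rational points. But that is where the proposal stops: no such curve is ever produced, and the step you yourself call ``the crux of the argument'' (the computer-assisted identification of a genus~$\le 1$ slice and, in the genus-$1$ case, the proof that its rank is positive) is left entirely open. As written, this is a search plan, not a proof; nothing in it certifies that the search terminates successfully, and indeed you concede that a general-type surface need not contain any such curve. The paper closes exactly this gap with an explicit computation: in the model $S_6\subset\p^3$ given by $W^2F_3(X,Y,Z)=F_5(X,Y,Z)$, the plane section $Z=0$ is a quintic plane curve birational to the elliptic curve $y^2=x^3+4x^2+3x+1$ (conductor $124$), the point $(0,1)$ is shown to be of infinite order via the duplication formula and Nagell--Lutz, and one checks that this curve meets the boundary $S_6\setminus\M_2(6)$ in only finitely many points, so infinitely many of its rational points land in $\M_2(6)$. (A rational curve, the preimage of the plane cubic $X^3+Y^3+Z^3=X^2Y+Y^2Z+Z^2X$, is found as well.) Without an explicit curve and an explicit non-torsion point, part~(3) is not established.

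Two further mismatches are worth noting. First, you propose to search inside the affine model $\{G(p_4,p_5,p_6)=0\}\subset\A^3$ coming from the normalisation $0\mapsto\infty\mapsto 1\mapsto\cdots$; the paper explicitly remarks that this model has bad singularities and a non-linear $\sigma_6$-action, and the curves above were found only after passing to the nicer quintic model $S_6$, where they appear as coordinate-plane sections and pullbacks of low-degree plane curves --- not via the dynamical slicing conditions (multipliers of fixed points, marked critical points) you suggest, which are plausible but unsubstantiated. Second, the statement to be proved is the whole of Theorem~\ref{Thm:Main}; your text treats parts~(1) and~(2) as given and addresses only part~(3), so even granting the strategy, the rationality statements for $n\le 5$ and for $\M_2(6)/\langle\sigma_6\rangle$, and the general-type and smooth-affine claims for $\M_2(6)$, remain unproved.
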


For $n=6$, we also show that $\M_2(6)/\langle \sigma_6^2 \rangle$
is of general type, whereas $\M_2(6)/\langle \sigma_6^3 \rangle$ is rational.

We finish this introduction by detailing the technique we used
to obtain this result, and especially the parts~$(2)$ and $(3)$:

As we show in $\S\ref{MnP5Ak}$, the variety $\M_2(n)$ naturally
embeds into $\p^5\times \A^{n-3}$ for $n\ge 3$, and the projection
to $\A^{n-3}$ yields an embedding for $n\ge 5$. The surface $\M_2(6)$
can thus be viewed, via this technique, as an explicit sextic hypersurface
in $\A^3$.  However, the equation of the surface is not very nice,
and its closure in $\p^3$ has bad singularities (in particular
a whole line is singular). Moreover, the action of $\sigma_6$ on $\M_2(6)$
is not linear. We thus wanted to obtain a better description of $\M_2(6)$.

The variety $\M_2(6)$ embeds into the moduli space
$P_1^6$ of $6$ ordered points of $\p^1$, modulo $\Aut(\p^1) = \PGL_2$.
This variety $P_1^6$ can be viewed in $\p^5$ as the rational
cubic threefold defined by the equations
$$
\begin{array}{ccccccccccccl}
x_0+x_1+x_2+x_3+x_4+x_5=0,\\
x_0^3+x_1^3+x_2^3+x_3^3+x_4^3+x_5^3=0
\end{array}
$$
(see \cite[Example 2, page 14]{DoOr}).
We obtain $\M_2(6)$ as an open subset of the projective surface
in $P_1^6$ given by
$$
\begin{array}{ccccccccccccl}
x_0^3+x_1^2x_2+x_2^2x_3+x_3^2x_1+x_4^2x_5 +x_4x_5^2=0,
\end{array}
$$
with $\sigma_6$ acting by
$[x_0:\dots:x_5] \mapsto [x_0:x_2:x_3:x_1:x_5:x_4].$
The quotient $\M_2(6)/\langle\sigma_6^3\rangle$ can thus be explicitly
computed; using tools of birational geometry we obtain that it
is rational, so $\M_2(6)$ is birational to a double cover of~$\p^2$.
The ramification curve obtained is the union of a smooth
cubic with a quintic having four double points. Choosing coordinates
on $\p^2$ so that the action corresponding to $\sigma_6$ is an
automorphism of order $3$, and contracting some curves
(see Remark~\ref{Rema:Cont} below),
we obtain an explicit description of the surface $\M_2(6)$:
\begin{proposition}\label{Prop:IntroDetailsM6}
$(i)$ The surface $\M_2(6)$ is isomorphic to an open subset of
the quintic irreducible hypersurface $S_6\subset \p^3$
given by
$$ W^2F_3(X,Y,Z)=F_5(X,Y,Z), $$
where
$$
\begin{array}{rcl}
F_3(X,Y,Z)&=&(X+Y+Z)^3+(X^2Z+XY^2+Y\!Z^2)+2XY\!Z,\\
F_5(X,Y,Z)&=&(Z^3X^2+X^3Y^2+Y^3Z^2)-XY\!Z(Y\!Z+XY+XZ),
\end{array}
$$
and the action of $\sigma_6$ corresponds to the restriction of
the automorphism
$$
[W:X:Y:Z]\mapsto [-W:Z:X:Y].
$$

$(ii)$ The complement of $\M_2(6)$ in $S_6$ is the union of $9$
lines and $14$ conics, and is also the trace of an ample divisor
of $\p^3$: the points of $\M_2(6)$ are points $[W:X:Y:Z]\in S_6$
satisfying that $W^2 \neq XY+YZ+XZ$ and that $W^2,X^2,Y^2,Z^2$
are pairwise distinct.

\end{proposition}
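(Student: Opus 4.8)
The plan is to realize $\M_2(6)$ concretely inside the moduli space $P_1^6$ of six ordered points of $\p^1$, and then carry out an explicit birational reduction. First I would recall the embedding $\M_2(n)\hookrightarrow P_1^6$: a pair $(f,p)$ with $p$ periodic of order $6$ determines the orbit $(p_1,\dots,p_6)=(p,f(p),\dots,f^5(p))$, six distinct points of $\p^1$, and conversely $(f,p)$ is recovered from the ordered $6$-tuple up to $\PGL_2$ because a degree-$2$ rational map is determined by three generic points of the source together with their images (here $p_1,p_2,p_3 \mapsto p_2,p_3,p_4$ already pins down $f$, and the remaining conditions $f(p_4)=p_5$, $f(p_5)=p_6$, $f(p_6)=p_1$ become the equations cutting out $\M_2(6)$). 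Using the cubic model of $P_1^6$ from \cite[Example 2, page 14]{DoOr} with coordinates $x_0,\dots,x_5$ subject to $\sum x_i=\sum x_i^3=0$, I would translate the three "closing up" conditions on the $6$-tuple into the single additional cubic equation $x_0^3+x_1^2x_2+x_2^2x_3+x_3^2x_1+x_4^2x_5+x_4x_5^2=0$; this is a finite computation in the standard cross-ratio / Joubert-type coordinates on $P_1^6$. The shift $\sigma_6$ of the orbit becomes the permutation $[x_0:\cdots:x_5]\mapsto[x_0:x_2:x_3:x_1:x_5:x_4]$, which one checks preserves both the cubic threefold and this new cubic. At this stage $\M_2(6)$ is an open subvariety of a surface cut out on the cubic threefold by one more cubic, with an explicit order-$6$ automorphism.

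Next I would pass to the quotient by $\sigma_6^3$, an involution. Since this involution is explicit and linear on $\p^5$, the quotient surface $\M_2(6)/\langle\sigma_6^3\rangle$ can be computed by invariant theory; using birational geometry (blowing down $(-1)$-curves, adjunction to identify the canonical class) one shows this quotient is rational over $\Q$. Consequently $\M_2(6)$ is birational to a double cover of $\p^2$ branched along some curve; tracing through the construction, the branch locus is the union of a smooth cubic and a quintic with four nodes. I would then choose coordinates $[X:Y:Z]$ on this $\p^2$ so that the residual action of $\sigma_6$ (of order $3$) becomes the cyclic permutation $[X:Y:Z]\mapsto[Z:X:Y]$; writing the branch cubic as $F_3$ and the branch quintic (times a suitable cube to make it a quintic-over-cubic) as $F_5$ in the symmetric form displayed, the double cover is $W^2F_3=F_5$, which is exactly $S_6\subset\p^3$, and $\sigma_6$ lifts to $[W:X:Y:Z]\mapsto[-W:Z:X:Y]$ (the sign because the double-cover involution is $W\mapsto -W$ and must commute appropriately). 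That the resulting quintic is irreducible follows since $\M_2(6)$ is geometrically irreducible by \cite[Theorem 4.5]{Manes}. This establishes part $(i)$.

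For part $(ii)$ I would identify the locus removed from $S_6$ in passing from $\M_2(6)$ to its projective closure. In terms of the ordered $6$-tuple, the conditions defining $\M_2(6)$ inside $P_1^6$ are that $f$ genuinely has degree $2$ (not $1$) and that the six points are distinct and form a single orbit of order exactly $6$ — equivalently, none of the finitely many "degeneration" conditions holds (two points coinciding, the map dropping degree, or a periodic point of smaller order appearing). Each such condition is a codimension-$1$ closed subset of $P_1^6$; restricting to $S_6$ and transporting through the birational maps above, one gets explicitly the union of $9$ lines and $14$ conics. I would then check directly that this union equals the trace on $S_6$ of the reducible divisor $\{W^2=XY+YZ+XZ\}\cup\{X^2=Y^2\}\cup\{Y^2=Z^2\}\cup\{X^2=Z^2\}$ (an ample divisor of $\p^3$ since each factor has positive degree), and that the complement — the points with $W^2\neq XY+YZ+XZ$ and $X^2,Y^2,Z^2,W^2$ pairwise distinct — is exactly $\M_2(6)$. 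The symmetry of these conditions under $[W:X:Y:Z]\mapsto[-W:Z:X:Y]$ is immediate, which is a useful consistency check.

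The main obstacle is the explicit birational reduction in the middle step: producing the double-plane model with the stated branch locus (smooth cubic $\cup$ four-nodal quintic) and then finding coordinates that simultaneously diagonalize the $\sigma_6$-action and bring $F_3,F_5$ into the clean symmetric shape displayed. This is where one must actually run through the invariant-ring computation for $\sigma_6^3$, resolve the quotient singularities, contract the correct $(-1)$-curves (as in Remark~\ref{Rema:Cont}), and keep careful track of which open locus corresponds to $\M_2(6)$ throughout — a bookkeeping task that is conceptually routine but computationally the heart of the proof.
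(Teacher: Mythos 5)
Your plan is a reconstruction of the discovery route sketched in the introduction, not of the proof the paper actually gives, and as written it contains both an error and large deferred gaps. The error: a degree-$2$ endomorphism of $\p^1$ is \emph{not} determined by three points and their images (three point--image pairs determine a M\"obius map); since $\End_2$ is open in $\p^5$, one needs five conditions $f(p_i)=p_{i+1}$ to pin down $f$, and the correct mechanism is that the six cyclic conditions are six linear conditions on the $\p^5$ of coefficients, compatible exactly when one determinantal equation in the $6$-tuple vanishes. That single equation is why $\M_2(6)$ sits inside the three-fold $P_1^6$ as an open subset of a hypersurface; your version (``three closing-up conditions'' after $f$ is supposedly determined by three pairs) does not give a consistent dimension count, and the passage from it to the single cubic $x_0^3+x_1^2x_2+x_2^2x_3+x_3^2x_1+x_4^2x_5+x_4x_5^2=0$ is asserted rather than derived.

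More importantly, everything that carries the content of the Proposition is postponed: the translation into the Dolgachev--Ortland coordinates, the rationality of the quotient by $\sigma_6^3$, the identification of the branch curve, the choice of coordinates producing the displayed $F_3$ and $F_5$, and above all the exact determination of the boundary as the $9$ lines and $14$ conics. The double-cover reduction only yields a \emph{birational} model, whereas part $(i)$ asserts an isomorphism onto a precisely described open subset of $S_6$, and part $(ii)$ \emph{is} the description of that open subset; you acknowledge this bookkeeping as ``the heart of the proof'' but do not carry it out. The paper avoids the whole derivation: it writes down the explicit map $\varphi$ and its inverse (Lemma~\ref{Lemm:M6S6}), substitutes into the parametrisation of Lemma~\ref{Lemm:ProjectionAk} so that $W^2F_3=F_5$ appears as the single closing condition, reads off the action of $\sigma_6$ by an explicit conjugation, and determines the complement by factorising the resultant of the resulting quadratic map and analysing each factor; smoothness, affineness and general type then follow in Corollary~\ref{Coro:Generaltype}. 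Finally, a small internal inconsistency in your part $(ii)$: the divisor you propose omits the components $W^2=X^2$, $W^2=Y^2$, $W^2=Z^2$, yet your description of the complement (correctly) requires $W^2,X^2,Y^2,Z^2$ pairwise distinct, so the two statements do not match.
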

\begin{remark}\label{Rema:Cont}
The blow-up of the singular points of $S_6$ gives a birational
morphism $\hat{S}_6\to S_6$, and the linear system $|K_{\hat{S}_6}|$
induces a double covering $\hat{S}_6\to \p^2$, ramified over  a
quintic and a cubic, corresponding to $F_3=0$ and $F_5=0$. One
can also see that the surface $\hat{S}_6$ is a Horikawa surface
since $c_2=46=5(c_1)^2+36$.
\end{remark}
This result is proved below, by directly giving the explicit isomorphism
that came from the strategy described above (Lemma~\ref{Lemm:M6S6}).
The proof is thus significantly shorter than the original derivation
of the formula.

From the explicit description, it directly follows that $S_6$ is
of general type (Corollary~\ref{Coro:Generaltype}). The set of
rational points should thus not be Zariski dense, according to
Bombieri--Lang conjecture. We have however infinitely many rational
points in $\M_2(6)$, which are contained in the preimage in $S_6$
of the rational cubic curve $X^3 + Y^3 + Z^3 = X^2 Y + Y^2 Z + Z^2 X$,
which is again rational, and the preimages in~$S_6$ of the lines
$X=0$, $Y=0$, $Z=0$, which are elliptic curves of rank~$1$ over~$\Q$.
But, for any number field $K$ and for any finite fixed set $S$
of places of $K$ containing all the archimedean ones, the set of
$S$--integral points of $\M_2(6)$ is finite.

We thank Pietro Corvaja for introducing one of us (Canci)
to the subject of this article and for his comments,
and thank Michelle Manes and Michael Zieve for telling one of us
(Elkies) of the preprint that the other two posted on the arXiv
the same day that he spoke on this question at a BIRS workshop.
Thanks also to Igor Dolgachev for interesting discussions during
the preparation of the article.  We are grateful also to
Umberto Zannier for his useful comments.

\section{Preliminaries}
\subsection{The variety $\End_2$}\label{SubSec:End2}$\ $\\
Associating to $(a_0:\dots:a_5)\in \p^5$ the rational map (endomorphism)
of $\p^1$
$$[u:v]\mapsto [a_0 u^2+a_1 uv+a_2 v^2: a_3 u^2+a_4 uv+a_5v^2],$$
the variety $\End_2$ can be viewed as the open subset of $\p^5$
where  $a_0 u^2+a_1 uv+a_2 v^2$ and $a_3 u^2+a_4 uv+a_5v^2$ have
no common roots; explicitly, it is equal to the open subset
of $\p^5$ which is the complement of the quartic hypersurface defined
by the polynomial
$$
\Res(a_0,\dots,a_5)
= a_2^2a_3^2+a_0^2a_5^2-2a_3a_2a_0a_5-a_1a_2a_3a_4-a_4a_1a_0a_5+a_0a_4^2a_2+a_1^2a_3a_5,
$$
where the polynomial $\Res$ is the homogeneous resultant of the
two polynomials $a_0 u^2+a_1 uv+a_2 v^2$ and $a_3 u^2+a_4 uv+a_5v^2$.

\subsection{Embedding $\M_2(n)$ into $\p^5\times \A^{n-3}$}\label{MnP5Ak}$\
$\\
When $n\ge 3$, any element of $\widetilde{\M}_d(n)$ is in the orbit
under $\Aut(\p^1)=\PGL_2$ of exactly one element of the form $$(f,
[0:1], [1:0], [1:1], [x_1:1],\dots, [x_{n-3}:1]),$$ where $f\in
\End_2$ and $(x_1,\dots,x_{n-3})\in \A^{n-3}$.

In particular, the surface $\M_2(n)$ is isomorphic to a locally
closed subset (and hence a subvariety) of $\End_2 \times \A^{n-3}\subset
\p^5\times \A^{n-3}$.
\begin{lemma}\label{Lemm:ProjectionAk}
Viewing $\M_2(n)$ as a subvariety of $\p^5\times \A^{k}$,
where $k=n-3$ as before,
and assuming that $n\ge 5$, the projection $\p^5\times \A^{k}\to
\A^{k}$ restricts to an isomorphism from $\M_2(n)$ with its image,
which is locally closed in $\A^{k}$, and is an affine surface.

The inverse map sends $(x_1,\dots,x_{k})$ to $(a_0:\dots:a_5)$,
where
$$\begin{array}{rcl}
a_0&=&x_1(x_2x_k+x_1-x_2-x_k),\\
a_1&=&x_1(x_k^2-x_k^2x_2-x_2x_1+x_2+x_2x_1^2-x_1^2),\\
a_2&=& x_1x_k(x_kx_2-x_1x_k+x_2x_1-x_2-x_2x_1^2+x_1^2),\\
a_3&=&  x_1(x_2x_k+x_1-x_2-x_k),\\
a_4&=& -x_1x_k^2+x_k^2+x_kx_1^2-x_k+x_1x_k+x_2x_1^2-x_1^2x_kx_2+x_1-2x_1^2,\\
a_5&=&  0.\end{array}$$
\end{lemma}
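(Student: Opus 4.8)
The plan is to make the normal form explicit and then invert it by a direct computation, so the proof is really a matter of organising the algebra cleanly. By the remark preceding Lemma~\ref{Lemm:ProjectionAk}, every point of $\widetilde{\M}_2(n)$ is $\PGL_2$-equivalent to a unique point of the form $(f,[0:1],[1:0],[1:1],[x_1:1],\dots,[x_k:1])$ with $k=n-3$; the class of $f$ in $\End_2\subset\p^5$ is then determined by the $k+3$ conditions $f([0:1])=[1:0]$, $f([1:0])=[1:1]$, $f([1:1])=[x_1:1]$, $f([x_i:1])=[x_{i+1}:1]$ for $1\le i\le k-1$, and $f([x_k:1])=[0:1]$. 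Writing $f=[a_0u^2+a_1uv+a_2v^2:a_3u^2+a_4uv+a_5v^2]$, the first of these forces $a_2=0$ (and $a_5\neq 0$), the condition $f([1:0])=[1:1]$ gives $a_0=a_3$, and $f([x_k:1])=[0:1]$ gives $a_0x_k^2+a_1x_k+a_2=0$, i.e.\ (using $a_2=0$) $a_0x_k+a_1=0$ after dividing by $x_k$ — but one must be careful, since whether $x_k=0$ is allowed depends on the distinctness hypotheses. In any case one is left with a system that is \emph{linear} in the remaining unknowns once one fixes, say, the scaling; solving it produces a rational parametrisation $(x_1,\dots,x_k)\dashmapsto(a_0:\dots:a_5)$.

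Concretely I would proceed as follows. First, normalise so that $a_2=0$, $a_3=a_0$, $a_5\neq 0$, and treat $[a_0:a_1:a_4:a_5]$ as the unknowns. The remaining interpolation conditions $f([1:1])=[x_1:1]$, $f([x_i:1])=[x_{i+1}:1]$ for $1\le i\le k-1$, and $f([x_k:1])=[0:1]$ are $k+1$ equations of the shape $a_0 t^2+a_1 t = x\,(a_0 t^2 + a_4 t + a_5)$ for appropriate $(t,x)$, hence linear and homogeneous in $(a_0,a_1,a_4,a_5)$. Choosing any two of them — for instance the conditions coming from $t=1$ and $t=x_k$ — lets one solve for two of the $a_i$ in terms of the other two and the $x_j$; substituting back into the remaining equations shows they are automatically satisfied for points in $\M_2(n)$ (this is where the defining equations of the image in $\A^k$ enter, but one does not need them for the \emph{inversion} formula itself). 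One then clears denominators to get polynomial expressions, which after simplification must match the displayed $a_0,\dots,a_5$; the fact that $a_5=0$ and $a_3=a_0$ in the statement confirms that we have applied an overall automorphism of $\p^1$ at the target — note the normal form in the text sends $[0:1]\mapsto[1:0]$, so $a_5=0$ is consistent with $p_1=[0:1]$ being the image of $p_k=[x_k:1]$... in fact a short check of $f([x_k:1])$ against $a_5=0$ pins down the normalisation being used. The cleanest route is simply to verify directly that the stated formula, together with the fixed points $[0:1],[1:0],[1:1],[x_1:1],\dots,[x_k:1]$, satisfies all the orbit relations $f(p_i)=p_{i+1}$ identically in the $x_j$; this is a finite polynomial identity.

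For the isomorphism claim I would argue in two halves. Injectivity and the fact that the projection is an isomorphism onto its image: since $\M_2(n)$ already sits in $\End_2\times\A^k\subset\p^5\times\A^k$ via the unique normal form, and the displayed formula exhibits a morphism $\A^k\supset U\to\p^5$ recovering the $\p^5$-coordinate from the $\A^k$-coordinate, the projection to $\A^k$ has a regular section on its image, hence is an isomorphism onto a locally closed subset of $\A^k$. That the image is \emph{locally closed} follows because $\M_2(n)$ is locally closed in $\p^5\times\A^k$ and a locally closed subset on which a coordinate projection is injective with regular inverse maps to a locally closed subset. Affineness is inherited: $\widetilde{\M}_2(n)$ is affine for $n\ge 2$ (stated in the excerpt, as $\End_2$ is affine and the locus of distinct $n$-tuples in $(\p^1)^n$ is affine), and $\M_2(n)=\widetilde{\M}_2(n)/\PGL_2$ is affine for $n>1$, $d=2$ (also stated); alternatively, the image in $\A^k$ is cut out of $\A^k$ by the conditions that the $n$ marked points be distinct and that the resulting $f$ have nonzero resultant — open conditions — intersected with the closed interpolation locus, which for $n\ge5$ is already cut out, so it is an open subset of an affine closed set, hence affine once one checks the open part is a principal open, which it is (complement of $\Res\neq0$ and of $x_i\neq x_j$, etc.).

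The main obstacle is bookkeeping rather than conceptual: one must be scrupulous about the normalisation of $f$ at the \emph{target} (the excerpt's normal form fixes $p_1,p_2,p_3$ but then $f$ is only determined up to nothing further — it \emph{is} determined — so the formula is canonical, yet the appearance of $a_5=0$ and $a_3=a_0$ must be traced to exactly which orbit relation forces each), and one must verify that the two interpolation conditions chosen to solve the linear system are independent on a dense open set and that the remaining $k-1$ conditions then follow — this last point is exactly the statement that $\M_2(n)$ maps \emph{onto} (not just into) a locally closed subset of $\A^k$, and checking it amounts to a resultant/Gröbner-type computation that I would relegate to a direct symbolic verification. The case analysis needed to handle degenerate values of the $x_j$ (ensuring the chosen pivots do not vanish, i.e.\ that $x_k\neq0$, $x_1\neq1$, etc., on the relevant locus) is the only place where the hypothesis $n\ge5$ and the distinctness of the $p_i$ really bite, and getting those exclusions right is the delicate part.
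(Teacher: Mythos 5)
Your overall strategy---solve the interpolation conditions linearly, invert, and describe the image by open and closed conditions---is the same as the paper's, but your write-up contains a sign-level error and, more importantly, omits the one step that actually carries the lemma. First, $f([0:1])=[a_2:a_5]$, so the condition $f([0:1])=[1:0]$ forces $a_5=0$ (with $a_2\neq 0$), not $a_2=0$ with $a_5\neq 0$; there is no hidden ``automorphism at the target'' to reconcile --- the $a_5=0$ in the statement is the direct consequence, and the linear system you propose to solve (unknowns $[a_0:a_1:a_4:a_5]$ with $a_2=0$) is not the one whose solution is the displayed formula. Relatedly, your ``cleanest route'' of checking that the stated formula satisfies \emph{all} orbit relations identically in the $x_j$ cannot work for $n\ge 6$: the relations $f([x_i:1])=[x_{i+1}:1]$ for $2\le i\le k-1$ are precisely the closed equations cutting out the image in $\A^k$ (for $n=6$, the sextic equation of $\M_2(6)$), so they are not identities in $x_1,x_2,x_k$.

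The genuine gap is the rank statement. After imposing $a_5=0$ and $a_0=a_3$, the three conditions $f([x_k:1])=[0:1]$, $f([1:1])=[x_1:1]$, $f([x_1:1])=[x_2:1]$ give a $3\times 4$ homogeneous linear system in $(a_1,a_2,a_3,a_4)$, and one must show it has rank $3$ at \emph{every} point of $\M_2(n)$ --- not merely on a dense open set, which is all you ask for. Generic independence would only yield a birational inverse, whereas the lemma asserts an isomorphism onto the image; full rank everywhere is also what guarantees that the six polynomials of the stated formula never vanish simultaneously on the image, so that the section is regular there. The paper proves this by computing two $3\times 3$ minors: their common vanishing forces $(x_1,x_2,x_k)=\bigl(1-t,\tfrac{1-2t}{1-t},t\bigr)$, and on that locus the solutions of the system give maps whose two coordinate forms share the linear factor $a_3u+a_4v$, hence are not in $\End_2$ --- a contradiction with $(f,x_1,\dots,x_k)\in\M_2(n)$. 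Without this (or an equivalent) argument, neither the injectivity of the projection nor the validity of the inverse formula on all of $\M_2(n)$ is established; you explicitly defer exactly this point to an unspecified symbolic verification, so as written the proof is incomplete.
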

\begin{proof}

Let $(f,x_1,\dots,x_k)$ be an element of $\M_2(n)\subset \End_2\times
\A^k$. Recall that $f$ corresponds to the endomorphism
 $$f\colon [u:v]\mapsto
[a_0 u^2+a_1 uv+a_2 v^2: a_3 u^2+a_4 uv+a_5v^2].$$

The equalities $f([0:1])=[1:0]$ and $f([1:0])=[1:1]$ correspond
respectively to saying that $a_5=0$ and $a_0=a_3$. Adding the conditions
$f([x_k:1])=[0:1]$, $f([1:1])=[x_1:1]$ and $f([x_1:1])=[x_2:1]$
yields
\begin{equation}\label{EqMa1}\left(\begin{array}{rrrr}
x_k & 1 & x_k^2 & 0\\
1 & 1& 1-x_1 & -x_1\\
x_1 & 1 & x_1^2(1-x_2) & -x_1x_2\end{array}\right)\left( \begin{array}{r}
a_1 \\ a_2\\ a_3 \\ a_4\end{array}\right)=\left( \begin{array}{r}
0\\ 0 \\ 0\end{array}\right).\end{equation}

We now prove that the $3\times 4$ matrix above has rank $3$, if
$(f,x_1,\dots,x_k)\in\M_2(n)\subset \End_2\times \A^k$.

The third minor (determinant of the matrix obtained by removing
the third column) is equal to
$-x_1(x_1-x_k+x_2x_k-x_2)$. Since $x_1\neq 0$, we only have to
consider the case where $x_1=x_k-x_2x_k+x_2$. Replacing this in
the fourth minor, we get $-x_2 (x_k-1)^2 (x_2-1) (x_2-x_2x_k-1+2x_k)$.
Since $x_2,x_k\notin \{0,1\}$, the only case is to study is when
$x_2-x_2x_k-1+2x_k=0$.  Writing $x_k=t$, this yields
$(x_1,x_2,x_k)=(1-t,\frac{1-2t}{1-t},t)$.
The solutions of the linear system (\ref{EqMa1}) are in this case
given by $a_1=-a_3 t+a_4$ and $a_2=-a_4 t$, and yields a map $f$
which is not an endomorphism of degree $2$, since $a_3u+a_4v$ is
a factor of both coordinates.

The fact that the matrix has rank $3$ implies that the projection
yields an injective morphism $\pi\colon\M_2(n)\to \A^k$. It also
implies that we can find the coordinates $(a_0:\dots:a_5)$ of $f$
as polynomials in $x_1,x_2,x_k$. A direct calculation yields the
formula given in the statement. It remains to see that the image
$\pi(\M_2(n))$ is locally closed in $\A^{k}$, and that it is an
affine surface.

To do this, we describe open and closed conditions that define
$\pi(\M_2(n))$. First, the coordinates $x_i$ have to be pairwise
distinct and different from $0$ and $1$. Second, we replace
$x_1,x_2,x_k$ in the formulas that give $a_0,\dots,a_5$, compute
the resultant $\Res(a_0,\dots,a_5)$ (see $\S \ref{SubSec:End2}$)
and ask that this resultant is not zero. These open conditions give the
existence of a unique map $f\in \End_2$ associated to any given
$x_1,x_2,x_k$. We then ask that $f([x_i:1])=[x_{i+1}:1]$ for $i=2,\dots,k-1$,
which are closed conditions. This shows that $\M_2(n)$ is locally
closed in $\A^k$, and moreover that it is an affine surface since
all open conditions are given by the non-vanishing of a finite
set of equations.
\end{proof}
\begin{corollary}
The surface $\M_2(5)$ is isomorphic to an open affine subset of
$\A^2$, and is thus smooth, and rational over $\Q$.
\end{corollary}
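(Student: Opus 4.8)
The plan is to specialize the previous lemma to the case $n=5$, so that $k=n-3=2$ and the parameter space is $\A^2$ with coordinates $(x_1,x_2)$. Setting $x_k=x_2$ in the formulas of Lemma~\ref{Lemm:ProjectionAk} gives explicit polynomial expressions for the coordinates $a_0,\dots,a_5$ of the associated map $f$ in terms of $x_1,x_2$ alone. Since $n\ge 5$, Lemma~\ref{Lemm:ProjectionAk} already tells us that the projection $\p^5\times\A^2\to\A^2$ restricts to an isomorphism of $\M_2(5)$ onto a locally closed subset of $\A^2$, which is an affine surface. So the only thing left to do is to understand which locally closed subset of $\A^2$ we get, and in particular to check that it is actually \emph{open} in $\A^2$ — for then it is a rational surface (being a nonempty open subset of $\A^2$, which is itself rational over $\Q$), and it is smooth since $\A^2$ is.

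The key point, then, is that when $k=2$ the ``closed conditions'' appearing in the proof of Lemma~\ref{Lemm:ProjectionAk} are vacuous: those conditions were $f([x_i:1])=[x_{i+1}:1]$ for $i=2,\dots,k-1$, and for $k=2$ this range of indices is empty. Hence $\pi(\M_2(5))$ is cut out \emph{purely} by open conditions: the $x_i$ must be pairwise distinct and different from $0$ and $1$ (so $x_1\neq x_2$, $x_1,x_2\notin\{0,1\}$), and the resultant $\Res(a_0,\dots,a_5)$, re-expressed as a polynomial in $x_1,x_2$ via the formulas above, must be nonzero. Each of these is the complement of a hypersurface in $\A^2$, so their intersection is an open subset of $\A^2$. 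It remains only to observe that this open set is nonempty; one can either exhibit a single point (e.g.\ pick concrete rational values of $x_1,x_2$ avoiding the forbidden locus and check the resultant is nonzero), or simply note that $\M_2(5)$ is nonempty — indeed it is geometrically irreducible by \cite[Theorem 4.5]{Manes} — so its image under the isomorphism $\pi$ is a nonempty open subset of $\A^2$.

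I do not expect a serious obstacle here; the corollary is essentially a bookkeeping consequence of Lemma~\ref{Lemm:ProjectionAk} together with the numerical accident $k-1<2$. The only mildly delicate point is making sure the intersection of finitely many complements-of-hypersurfaces is genuinely open (clear) and nonempty (immediate from irreducibility of $\M_2(5)$, or from writing down one explicit map), and confirming that an open subvariety of $\A^2$ is smooth and rational over $\Q$, which is standard.
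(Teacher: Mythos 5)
Your proposal is correct and follows the same route as the paper, which simply states that the corollary "follows directly from Lemma~\ref{Lemm:ProjectionAk}"; your write-up just makes explicit the key bookkeeping point (for $k=2$ the closed conditions $f([x_i:1])=[x_{i+1}:1]$, $i=2,\dots,k-1$, are vacuous, so the image is a nonempty open, hence smooth and rational, affine subset of $\A^2$).
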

\begin{proof}
This result follows directly from Lemma~\ref{Lemm:ProjectionAk}.
\end{proof}
\section{The surfaces $\M_2(n)$ for $n\le 5$}
The proof of the rationality of the surfaces $\M_2(n)$ and $\M_2(n)/\langle
\sigma_n\rangle$ for $n\le 5$ is done by case-by-case analysis.

Note that the rationality of $\M_2(n)$ implies that
$\M_2(n)/\langle \sigma_n\rangle$ is unirational, and thus
geometrically rational (rational over the algebraic closure of $\Q$)
by Castel\-nuovo's rationality criterion for algebraic surfaces.
However, over~$\Q$ there exist rational surfaces with non-rational quotients,
for example some double coverings of smooth cubic surfaces with only one
line defined over $\Q$. The contraction of the line in the cubic gives
a non-rational minimal del Pezzo surface of degree $4$, which admits
a rational double covering \cite[IV, Theorem 29.4]{Manin}.

 Hence, the rationality of $\M_2(n)/\langle \sigma_n\rangle$ is
not a direct consequence of the rationality of $\M_2(n)$.

\begin{lemma}
The surfaces $\M_2(3)$ and $\M_2(4)$ are isomorphic to affine open
subsets of $\A^2$ and are thus smooth, and rational over $\Q$.
The surfaces $\M_2(3)/\langle \sigma_3\rangle$ and $\M_2(4)/\langle
\sigma_4\rangle$ are also rational over $\Q$.
\end{lemma}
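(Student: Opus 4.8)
We want to show that $\M_2(3)$ and $\M_2(4)$ are (isomorphic to) affine open subsets of $\A^2$, and that the quotients by $\sigma_3$, $\sigma_4$ are rational over $\Q$. The starting point is the embedding of $\S\ref{MnP5Ak}$: for $n\ge 3$ every point of $\widetilde\M_2(n)$ is $\PGL_2$-equivalent to a unique one of the form $(f,[0:1],[1:0],[1:1],[x_1:1],\dots,[x_{n-3}:1])$, so $\M_2(n)$ sits inside $\End_2\times\A^{n-3}$.

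\emph{The case $n=3$.} Here $n-3=0$, so a priori we only see $\M_2(3)$ inside $\End_2\subset\p^5$, cut out by the three equations $f([0:1])=[1:0]$, $f([1:0])=[1:1]$, $f([1:1])=[0:1]$. As in the proof of Lemma~\ref{Lemm:ProjectionAk}, the conditions $f([0:1])=[1:0]$ and $f([1:0])=[1:1]$ give $a_5=0$ and $a_0=a_3$, and then $f([1:1])=[0:1]$ imposes one more linear condition on $(a_0:a_1:a_2:a_3:a_4)$. This leaves a $\p^2$ of candidate maps $f$, i.e.\ a plane $\Pi\cong\p^2$ inside $\p^5$; intersecting with the open set $\End_2$ (complement of the resultant quartic, $\S\ref{SubSec:End2}$) and removing the locus where the three marked points fail to be distinct or fail to have exact period $3$ gives $\M_2(3)$ as an open subset of that $\p^2$. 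To exhibit it as an open subset of $\A^2$ rather than of $\p^2$, I would check that the residual boundary inside $\Pi\cong\p^2$ contains a line defined over $\Q$ (which is forced, since some degeneration — e.g.\ the line at infinity of the chosen affine chart on the $a_i$, or the locus $\Res=0$ restricted to $\Pi$ — is a union of $\Q$-lines); deleting one such $\Q$-line from $\p^2$ leaves $\A^2$, and $\M_2(3)$ is then an open subset of that $\A^2$, hence smooth and rational over $\Q$.

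\emph{The case $n=4$.} Now $n-3=1$, and Lemma~\ref{Lemm:ProjectionAk} does \emph{not} apply ($n\ge 5$ is required there), so I cannot simply project to $\A^1$. Instead I proceed as for $n=3$: the equations $f([0:1])=[1:0]$, $f([1:0])=[1:1]$, $f([1:1])=[x_1:1]$, $f([x_1:1])=[0:1]$ again give $a_5=0$, $a_0=a_3$, and two further linear conditions on $(a_1:a_2:a_3:a_4)$ whose coefficients are polynomials in $x_1$. Solving this $2\times4$ linear system generically expresses $(a_1:a_2:a_3:a_4)$ rationally in terms of one extra parameter $t$ and $x_1$ — concretely, $\M_2(4)$ maps birationally onto an affine surface in the $(x_1,t)$-plane, and a rank computation like the one in Lemma~\ref{Lemm:ProjectionAk} (showing the $2\times4$ matrix has rank $2$ on $\M_2(4)$) upgrades this to an isomorphism with a locally closed subset of $\A^2$. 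So $\M_2(4)$ is an open subset of $\A^2$, smooth and rational over $\Q$.

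\emph{The quotients.} For the quotient surfaces I would use that $\M_2(n)$ is an open subset of $\A^2=\Spec\Q[u,v]$ on which $\sigma_n$ acts by a birational (indeed biregular, on the open locus) self-map of finite order; it suffices to show the quotient of $\p^2$, or of a suitable $\sigma_n$-equivariant smooth projective model, is rational over $\Q$. For $n=3$, $\sigma_3$ has order $3$; choosing $\PGL_2$-coordinates so that $\sigma_3$ becomes a \emph{linear} automorphism of $\p^2$ of order $3$ (this is the same device used later for $S_6$, and is possible because cyclically permuting three marked points is realized by an element of $\PGL_2$ of order $3$), the quotient $\p^2/\langle\sigma_3\rangle$ is a well-known rational surface over $\Q$ (the invariant ring is generated by three conics, giving a rational map to a surface one checks is rational, e.g.\ a weighted projective space or a cone), and $\M_2(3)/\langle\sigma_3\rangle$ is an open subset of it. The same strategy handles $\sigma_4$ of order $4$.

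\emph{Main obstacle.} The genuinely delicate point is realizing $\sigma_n$ as a \emph{linear} action on a projective model defined \emph{over $\Q$}, and identifying the resulting quotient surface precisely enough to see its rationality over $\Q$ (not merely over $\overline\Q$) — the cautionary example in the paragraph preceding this lemma, of a $\Q$-rational surface with a non-$\Q$-rational quotient, shows this is not automatic. For the small orders $n=3,4$ this will come down to an explicit coordinate change diagonalizing the cyclic action and an explicit description of the invariants, which I expect to be routine but is the step requiring genuine care; the linear-algebra computations cutting out $\M_2(3)$ and $\M_2(4)$ themselves are straightforward.
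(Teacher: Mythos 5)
Your treatment of the surfaces themselves is essentially the paper's: normalize the marked cycle to start $[0:1],[1:0],[1:1],\dots$, solve the resulting linear conditions on the coefficients, and observe that a linear factor of the resultant (e.g.\ $a_3$ for $n=3$, $a_2$ for $n=4$) forces the solution to lie in an affine chart, so that $\M_2(3)$ and $\M_2(4)$ become open subsets of $\A^2$ rather than of $\p^2$ or $\p^1\times\A^1$. You gesture at this last point rather than verifying it (for $n=4$ your ``extra parameter $t$'' silently assumes a chart on the $\p^1$ of solutions), but the idea is the right one and the verification is a short computation.

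The genuine gap is in the quotients, specifically $\M_2(4)/\langle\sigma_4\rangle$. Your plan rests on the claim that, as for $\sigma_3$, one can choose $\PGL_2$-coordinates so that $\sigma_4$ acts linearly, ``because cyclically permuting the marked points is realized by an element of $\PGL_2$''. That argument only works when all the marked points are the fixed reference points, i.e.\ for $n\le 3$: there the re-normalizing element $g$ is a fixed automorphism of $\p^1$, so $f\mapsto gfg^{-1}$ is linear in the coefficients. For $n=4$ the fourth point $[1:x]$ moves, the normalizing $g$ depends on $x$ (and a Möbius transformation of order $4$ cyclically permuting $0,\infty,1,x$ exists only for special cross-ratios), and the induced action on $([a_1:a_2],x)$ is genuinely non-linear: in the paper it is $([a_1:a_2],x)\mapsto([(a_2-a_2x^2-a_1x-xa_2)x:(x-1)(a_1+xa_2)],\frac{x}{x-1})$. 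The paper must first produce an explicit, non-obvious birational map $\kappa$ of $\A^2$ conjugating this to the linear involution-free map $(x,y)\mapsto(-y,x)$ before any invariant theory can start; nothing in your proposal supplies such a linearization. Moreover, even once the action is linear, rationality over $\Q$ of the quotient is exactly the point the paper warns is not automatic, and it is proved there by explicit invariants: for order $3$ the quotient is the singular cubic $v_1^3-9v_2^2-3v_2v_3-v_3^2=0$, rational by projection from the singular point, and for order $4$ it is the quartic $x_1^4+2x_3^2+2x_2^2-3x_1^2x_2=0$, a conic bundle over the $x_1$-line with the section $(x_2,x_3)=(0,0)$. Asserting that $\p^2$ modulo a linear cyclic group is ``a well-known rational surface over $\Q$'' and deferring the computation leaves precisely the delicate half of the lemma unproved.
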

\begin{proof}
$(i)$
We embed $\M_2(4)$ in $\End_2\times \A^1\subset \p^5\times \A^{1}$
as in \S\ref{MnP5Ak}.  Recall that an element $(f,x)$ of $\M_2(4)$
corresponds to an endomorphism
$$
f \colon [u:v] \mapsto [a_0 u^2+a_1 uv+a_2 v^2: a_3 u^2+a_4 uv+a_5v^2]
$$
and a point $x\in \A^1$ which satisfy
$$
f([0:1])=[1:0],\ f([1:0])=[1:1],\ f([1:1])=[1:x],\ f([1:x])=[0:1].
$$
This implies  that $[a_0:\dots:a_5]$ is equal to
$$
[-a_1 x-a_2 x^2: a_1: a_2: -a_1 x-a_2 x^2: -a_1x^2-a_2x^3+2a_1x+xa_2+a_2x^2: 0],
$$
so $\M_2(4)$ is isomorphic to an open subset of $\p^1\times \A^1$,
with coordinates $([a_1:a_2],x)$.  Because the resultant of the map equals
$$
-a_2x^2(x-1)(x-2)(a_1+a_2+xa_2)(a_1+xa_2)(a_1x-a_1+a_2x^2),
$$
we obtain an open affine subset of $\A^2$.

$(ii)$ The map $\sigma_4$ sends $(f,[0:1],[1:0],[1:1],[1:x])$ to
$(f,[1:0],[1:1]$, $[1:x],[0:1])$, which is in the orbit of
$$
(gfg^{-1},g([1:0]),g([1:1]),g([1:x]),g([0:1]))
=
(gfg^{-1},[0:1],[1:0],[1:1],[x-1:x]),
$$
where $g\colon [u:v]\mapsto [(1-x)v:x(u-v)]$.
The endomorphism $gfg^{-1}$ corresponds to
$[u:v]\mapsto [b_0 u^2+b_1 uv+b_2 v^2: b_3 u^2+b_4 uv+b_5v^2]$,
where $[b_0:\dots:b_5]$ is equal to
$[(x-1)(xa_2+a_1+a_2)x^2: (x-1)(a_2-a_2x^2-a_1x-xa_2)x: (x-1)^2(a_1+xa_2):
(x-1)(xa_2+a_1+a_2)x^2: -(a_2x^3+a_1x^2-a_2x^2-2a_1x-xa_2+a_1)x: 0]$.
Hence, $\sigma_4$ corresponds to
$$
([a_1:a_2],x) \mapsto
 \left([(a_2-a_2x^2-a_1x-xa_2)x: (x-1)(a_1+xa_2)],\frac{x}{x-1}\right).
$$

The birational map $\kappa\colon \p^1\times \A^1\dasharrow \A^2$
taking $([a_1:a_2],x)$ to

{\small
$$
\left(
\frac{a_2(a_1+(x+1)a_2)x}{(x^3+x^2-x)a_2^2+(x-1)a_1^2+2(x^2-1)a_1a_2},
\frac{(xa_1+a_2x^2-a_1)(a_1+xa_2)}{(x^3+x^2-x)a_2^2+(x-1)a_1^2+2(x^2-1)a_1a_2}
\right),
$$
}
whose inverse is
$$
\begin{array}{c}\kappa^{-1}\colon (x,y)\dashmapsto
   \left([\frac{2(x^2-4yx+2x-y^2+1)y}{(1+x-y)(1+x+y)(1-x-y)}:1],
   \frac{-4xy}{(1+x+y)(1-x-y)}\right),
\end{array}
$$
conjugates $\sigma_4$ to the automorphism $\tau\colon (x,y)\mapsto (-y,x)$
of $\A^2$. This automorphism~$\tau$ has eigenvalues $\pm \mathbf{i}$ over~$\C$.
If $v_1,v_2$ are eigenvectors $v_1,v_2$ then the invariant
subring is generated, over $\C$, by $v_1v_2$, $(v_1)^4, (v_2)^4$.
This implies that
$$
\begin{array}{rcl}
\Q[x,y]^{\tau}
&\!\!=\!\!& \Q[x^2+y^2,x^4+y^4,xy(x^2-y^2)]\\
&\!\!=\!\!& \Q[x_1,x_2,x_3]/(x_1^4+2x_3^2+2x_2^2-3x_1^2x_2).
\end{array}
$$
The surface $\M_2(4)/\langle \sigma_4\rangle$ is thus birational, over $\Q$,
to the hypersurface of $\A^3$ given by $x_1^4+2x_3^2+2x_2^2-3x_1^2x_2=0$.
This hypersurface is a conic bundle over the $x_1$ line,
with a section $(x_2,x_3) = (0,0)$, and is therefore rational.
An explicit birational map to $\A^2$ is
$(x_1,x_2,x_3)\dashmapsto (x_1,\frac{x_1x_3}{x_1^2-x_2})$,
whose inverse is
$(x,y) \dashmapsto
\left(x, \frac{(x^2+2y^2)x^2}{2(x^2+y^2)}, \frac{x^3y}{2(x^2+y^2)}\right)$.

$(iii)$ We embed $\M_2(3)$ in $\End_2$ as in \S\ref{MnP5Ak}.
It is given by maps $f$ which satisfy $f([0:1])=[1:0],\ f([1:0])=[1:1],\
f([1:1])=[1:0] $, and is thus parametrised by an open subset of
$\p^2$. A point $[a_1:a_3:a_4]$ corresponds to an endomorphism
$$
[u:v]\mapsto [a_3u^2+a_1uv+(-a_1-a_3)v^2, a_3u^2+a_4uv].
$$
Because the corresponding resultant is $a_3(a_3+a_4)(a_3+a_1)(a_1+a_3-a_4)$,
the surface $\M_2(3)$ is the complement of three lines in $\A^2$.

$(iv)$ The map $\sigma_3$ sends $(f,[0:1],[1:0],[1:1])$ to
$(f,[1:0],[1:1],[0:1])$, which is in the orbit of
$$
(gfg^{-1},g([1:0]),g([1:1]),g([0:1]))=(gfg^{-1},[0:1],[1:0],[1:1]),
$$
where $g\colon [u:v]\mapsto [u-v:u]$.
Because the endomorphism $gfg^{-1}$ equals
$$
[u:v]\mapsto [(a_1+a_3)u^2-(a_1+2a_3+a_4)uv+(a_3+a_4)v^2,
(a_1+a_3)u^2-(a_1+2a_3)uv],
$$
the automorphism $\sigma_3$ corresponds to the automorphism
$$
[a_1:a_3:a_4]\mapsto [-a_1-2a_3-a_4:a_1+a_3:-a_1-2a_3]
$$
of $\p^2$.  The affine plane where $a_4-a_1-a_3\neq 0$ is invariant,
and the action, in coordinates
$x_1=\frac{a_1}{a_4-a_1-a_3}$, $x_2=\frac{-a_1-2a_3-a_4}{a_4-a_1-a_3}$,
corresponds to $(x_1,x_2)\mapsto (x_2,-x_1-x_2)$. The quotient of
$\A^2$ by this action is rational over $\Q$ (see the proof of
Lemma~\ref{Lem:M26} below, where the quotient of the same action on~$\A^2$
is computed), so $\M_2(3)/\langle \sigma_3\rangle$ is rational over~$\Q$.
\end{proof}
\begin{lemma}\label{LemM1M2}
The varieties $\M_2(1)$, $\M_2(2)$ and $\M_2(2)/\langle \sigma_2\rangle$
are surfaces that are rational over~$\Q$.
\end{lemma}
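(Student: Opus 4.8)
The plan is to handle $\M_2(1)$ and $\M_2(2)$ in exactly the same spirit as the earlier cases, by choosing normal forms for the marked periodic points and reading off an explicit affine model. For $\M_2(1)$, a pair $(f,p)$ with $f(p)=p$ can be normalised so that $p=[0:1]$ is the fixed point; after using the rest of $\PGL_2$ one may send, say, a second point or a critical point to a standard position, leaving a two-parameter family. Concretely I would take $f\colon[u:v]\mapsto[a_0u^2+a_1uv+a_2v^2:a_4uv+a_5v^2]$ (the condition $f([0:1])=[0:1]$ forces $a_3=0$), and then use the remaining stabiliser of $[0:1]$ in $\PGL_2$ (an affine group $z\mapsto \alpha z+\beta$, two-dimensional) to normalise two more coefficients. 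The resulting quotient is visibly birational to an open subset of $\A^2$ with coordinates given by two invariant ratios of the $a_i$, and the non-vanishing of the resultant cuts out an open set; hence $\M_2(1)$ is rational over $\Q$.

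For $\M_2(2)$, I would follow the pattern of \S\ref{MnP5Ak}: an element of $\widetilde{\M}_2(2)$ is in the $\PGL_2$-orbit of $(f,[0:1],[1:0])$ with $f([0:1])=[1:0]$ and $f([1:0])=[0:1]$. These two conditions translate into linear conditions on the $a_i$ (namely $a_0=0$ from $f([1:0])=[0:1]$ and $a_5=0$ from $f([0:1])=[1:0]$), so $f$ is given by $[u:v]\mapsto[a_1uv+a_2v^2:a_3u^2+a_4uv]$, parametrised by $[a_1:a_2:a_3:a_4]\in\p^3$. The stabiliser of the ordered pair $([0:1],[1:0])$ in $\PGL_2$ is the one-dimensional torus $[u:v]\mapsto[\lambda u:v]$, which acts on $[a_1:a_2:a_3:a_4]$ by a linear torus action; its quotient is a rational surface, again an open subset of $\A^2$ after discarding the resultant locus and the diagonal locus $p_1=p_2$. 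So $\M_2(2)$ is rational over $\Q$.

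Finally, for $\M_2(2)/\langle\sigma_2\rangle$ I would compute the action of $\sigma_2$ in the coordinates just found. The map $\sigma_2$ sends $(f,p_1,p_2)$ to $(f,p_2,p_1)$, i.e.\ in the normal form it interchanges $[0:1]$ and $[1:0]$; conjugating by $g\colon[u:v]\mapsto[v:u]$ to restore the normal form shows that $\sigma_2$ acts on $[a_1:a_2:a_3:a_4]$ by $[a_1:a_2:a_3:a_4]\mapsto[a_4:a_3:a_2:a_1]$ (the swap $u\leftrightarrow v$), an involution that commutes with the torus action. One then descends this involution to the $\A^2$ model and checks that the quotient of $\A^2$ by this involution is rational over $\Q$ — this is easy since an involution of a rational surface with a fixed point (or more simply, one can diagonalise the linear involution and take invariants $x+y$, $(x-y)^2$, or a suitable analogue) has a rational quotient; in any case, the invariant ring is generated by at most three elements with a single relation defining a rational conic bundle, exactly as in the $\M_2(3)$ and $\M_2(4)$ computations above. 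The main obstacle is purely bookkeeping: keeping the resultant and distinctness loci straight through the normalisation and the change to invariant coordinates, so that one genuinely lands in an \emph{open} subset of a rational surface; there is no conceptual difficulty, since in each case the intermediate object is already rational (an open subset of $\p^3$ or $\p^2$ modulo a torus, hence birational to $\A^2$) and $\sigma_2$ acts linearly.
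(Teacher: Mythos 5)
Your normalisations are exactly the ones the paper uses: for $\M_2(1)$, fixing the periodic point and a second distinguished point and quotienting by the residual solvable stabiliser; for $\M_2(2)$, putting the $2$-cycle at $\{[0:1],[1:0]\}$, so that $f$ is $[u:v]\mapsto[a_1uv+a_2v^2:a_3u^2+a_4uv]$ with $[a_1:a_2:a_3:a_4]\in\p^3$ modulo the torus $[u:v]\mapsto[\lambda u:v]$, and $\sigma_2$ acting by $[a_1:a_2:a_3:a_4]\mapsto[a_4:a_3:a_2:a_1]$ (all of this agrees with the paper, up to the small slips that $f([0:1])=[0:1]$ forces $a_2=0$, not $a_3=0$, and that the involution does not commute with the torus but inverts it under conjugation — it still normalises the torus, so it does descend to the quotient). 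Up to these slips, parts one and two of your plan are the paper's argument in sketch form.

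The genuine gap is in the last step, for $\M_2(2)/\langle\sigma_2\rangle$. First, your intermediate claim is false: the torus quotient is \emph{not} an open subset of $\A^2$. Writing $t_1=a_4/a_3$, $t_2=a_1/a_3$, $t_3=a_2/a_3$, the invariant functions are $t_1^3/t_3$, $t_1t_2/t_3$, $t_2^3/t_3^2$, so the quotient is an open subset of the singular cone $xz=y^3$ in $\A^3$; the vertex corresponds to the class of $[u:v]\mapsto[a_2v^2:a_3u^2]$, which has a residual $\mu_3$-stabiliser and \emph{does} lie in $\M_2(2)$, so $\M_2(2)$ is a singular surface and there is no $\A^2$ model to which you can ``descend the involution'' and diagonalise it; on a merely birational $\A^2$ model the induced involution is only birational, not linear. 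Second, the general principle you fall back on — that an involution of a $\Q$-rational surface with a fixed point has $\Q$-rational quotient — is false, and the paper warns about precisely this at the start of Section 3: such a quotient is only unirational, hence geometrically rational by Castelnuovo, but over $\Q$ there are rational surfaces whose involution quotients are non-rational (double covers linked to minimal del Pezzo surfaces of degree $4$). So rationality of $\M_2(2)/\langle\sigma_2\rangle$ over $\Q$ cannot be waved through; it has to be computed. The computation is short once set up on the correct model: on the cone $xz=y^3$ the involution is $(x,y,z)\mapsto(z,y,x)$, its invariant algebra is generated by $x+z$, $xz$ and $y$, and since $xz=y^3$ the quotient is $\mathrm{Spec}\,\Q[x+z,y]\cong\A^2$ — this explicit invariant computation is the missing content of your third paragraph.
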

\begin{proof}
 $(i)$ Let us denote by $U\subset \widetilde{\M}_2(1)$ the open
subset of pairs $(f,p)$ where $p$ is not a criticial point, which
means here that $f^{-1}(p)$ consists of two distinct points, namely
$p$ and another one. This open set is dense (its complement has
codimension~$1$) and is invariant by $\PGL_2$.
We consider the morphism
$$\begin{array}{ccl} \tau\colon\A^1\setminus \{0\} \times \A^1
\times \A^1\setminus \{0\}&\to& U\\
 (a,b,c) & \mapsto& ([u:v]\mapsto [uv:au^2+buv+cv^2], [0:1]),\end{array}$$
and observe that $\tau$ is a closed embedding. Moreover, the multiplicative
group $\mathbb{G}_m$ acts on $\A^1\setminus \{0\} \times \A^1 \times
\A^1\setminus \{0\}$ via $(t,(a,b,c))\mapsto (t^2a,tb,c)$, and
the orbits of this action correspond to the restriction of the
orbits of the action of $\PGL_2$ on $U$.

In consequence $U/\PGL_2$ is isomorphic to $(\A^1\setminus \{0\}
\times \A^1 \times \A^1\setminus \{0\})/\mathbb{G}_m$, which is isomorphic to
$\mathrm{Spec}{\mathbb{Q}}[\frac{b^2}{a},c,\frac{1}{c}]
=\A^1\times\A^1\setminus \{0\} $.
Hence, $U/\PGL_2$ is rational, and thus $\M_2(1)$ is rational as well.

$(ii)$ We take coordinates $[a:b:c:d]$ on $\p^3$ and consider the
open subset $W\subset \p^3$ where $ad\neq bc$, $b\neq 0$, and $c\neq0$.
The morphism
$$
\begin{array}{rccl}
W&\to& \widetilde{\M}_2(2)\\
\; [a:b:c:d] & \mapsto& ([u:v]\mapsto [v(au+bv):u(cu+dv)], [0:1],[1:0])\end{array}
$$
is a closed embedding.  Moreover, the multiplicative group $\mathbb{G}_m$
acts on $W$ via $(t,[a:b:c:d])\mapsto [a\mu^2: b\mu^3:c:\mu d]$,
and the orbits of this action correspond to the restriction of
the orbits of the action of $\PGL_2$ on $\widetilde{\M}_2(2)$.

In consequence $\widetilde{\M}_2(2)/\PGL_2$ is isomorphic to $W/\mathbb{G}_m$.
Denote by $\widehat{W}\subset \p^3$ the open subset where $bc\neq 0$,
which is equal to
$\mathrm{Spec}(
  \Q[\frac{a}{b},\frac{a}{c},\frac{b}{c},\frac{c}{b},\frac{d}{b},\frac{d}{c}]
)$.
Writing $t_1=\frac{d}{c}, t_2=\frac{a}{c}, t_3=\frac{b}{c}$, we also have
$\widehat{W}=\mathrm{Spec}(\Q[t_1,t_2,t_3,\frac{1}{t_3}])$,
and the action of $\mathbb{G}_m$ corresponds to $t_i\mapsto \mu^it_i$.
This implies that
$\widehat{W}/\mathbb{G}_m =\mathrm{Spec}(
  \Q[\frac{(t_1)^3}{t_3},\frac{t_1t_2}{t_3},\frac{(t_2)^3}{(t_3)^2}]
)$,
and is thus isomorphic to the singular rational affine hypersurface
 $\Gamma\subset \A^3$ given by $xz=y^3$. The variety $W/\mathbb{G}_m\cong
\widetilde{\M}_2(2)/\PGL_2$ corresponds to the open subset  of
$\Gamma$  where $y\neq 1$.

$(iii)$ The map $\sigma_2$ corresponds to the automorphism
$[a:b:c:d]\mapsto [d:c:b:a]$ of $\p^3$, to the automorphism
$(t_1,t_2,t_3)\mapsto (\frac{t_2}{t_3},\frac{t_1}{t_3},\frac{1}{t_3})$
of~$\widehat{W}$, and to the automorphism $(x,y,z)\mapsto (z,y,x)$ of $\Gamma$.
The invariant subalgebra of $\Q[x,y,z]/(xz-y^2)$ is thus generated
by $x+z,xz,y$. Hence the quotient of $\Gamma$ by the involution
is the rational variety $\mathrm{Spec}(\Q[x+z,y])=\A^2$.
This shows that $\M_2(2)/\langle \sigma_2\rangle$ is rational over~$\Q$.
\end{proof}
\begin{remark}
The proof of Lemma~\ref{LemM1M2} shows that $\M_2(2)$ is an affine
singular surface but that $\M_2(2)/\langle \sigma_2\rangle$ is
smooth. One can also see that the surface $\M_2(1)$ is singular,
and that it is not affine.
\end{remark}

\begin{lemma}\label{Lemm:M25quotient}
The surface $\M_2(5)/\langle \sigma_5\rangle$ is rational over~$\Q$.
\end{lemma}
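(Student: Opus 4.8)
The plan is to run the same scheme as for $n\le 4$: realise $\M_2(5)$ as an open subset of $\A^2$, compute $\sigma_5$ in these coordinates, and then show that the quotient is rational over $\Q$. By Lemma~\ref{Lemm:ProjectionAk} and its corollary, $\M_2(5)$ is isomorphic to an open subset of $\A^2$ with coordinates $(x_1,x_2)$, where $(x_1,x_2)$ is the class of $(f,[0:1],[1:0],[1:1],[x_1:1],[x_2:1])$. The automorphism $\sigma_5$ sends this to the class of $(f,[1:0],[1:1],[x_1:1],[x_2:1],[0:1])$; conjugating by the unique $g\in\PGL_2$ with $g([1:0])=[0:1]$, $g([1:1])=[1:0]$, $g([x_1:1])=[1:1]$, namely $g\colon[u:v]\mapsto[(x_1-1)v:u-v]$, puts this back in normal form with new parameters $(g([x_2:1]),g([0:1]))$. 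A direct computation then gives
\[
\sigma_5\colon(x_1,x_2)\ \longmapsto\ \Bigl(\tfrac{x_1-1}{x_2-1},\,1-x_1\Bigr),
\]
a birational self-map of $\A^2$ of order $5$, restricting to an automorphism of $\M_2(5)$.

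Next I would simplify the formula. Putting $c_0=-x_1$ and $c_1=\tfrac{1-x_1}{x_2-1}$ transforms $\sigma_5$ into the classical period-$5$ Lyness map $(c_0,c_1)\mapsto\bigl(c_1,\tfrac{1+c_1}{c_0}\bigr)$; equivalently, $\sigma_5$ is the cyclic shift on a sequence $(c_0,\dots,c_4)$ with $c_{i-1}c_{i+1}=1+c_i$ (indices mod $5$). From these relations one checks that
\[
K\ :=\ \frac{(1+c_0)(1+c_1)(1+c_0+c_1)}{c_0c_1}\ =\ \prod_{i=0}^{4}c_i
\]
is $\sigma_5$-invariant, so $K$ descends to $\M_2(5)/\langle\sigma_5\rangle$, whose fibres are then the plane cubics $(1+c_0)(1+c_1)(1+c_0+c_1)=Kc_0c_1$. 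On a general such cubic $\sigma_5$ acts freely, hence by translation by a $5$-torsion point, so $\M_2(5)/\langle\sigma_5\rangle$ is birational to a genus-$1$ fibration over $\p^1$, i.e.\ to a rational elliptic surface over $\Q$ — concretely, the quotient of the universal elliptic curve carrying a rational $5$-torsion point by that point, whose singular fibres are of type $I_5,I_5,I_1,I_1$.

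To finish I would conclude $\Q$-rationality in one of two ways. First, computationally in the spirit of the earlier lemmas: produce a second $\sigma_5$-invariant rational function $L$ (for instance a cyclic but non-reflection-symmetric combination such as $\sum_i c_i^2c_{i+1}$), check that $(c_0,c_1)\mapsto(K,L)$ has degree $5$, and deduce $\Q\bigl(\M_2(5)\bigr)^{\sigma_5}=\Q(K,L)$, which, $K$ and $L$ being algebraically independent over $\Q$, is a purely transcendental extension of transcendence degree $2$. Alternatively, and more robustly: the exceptional curves over the (all $\Q$-rational) base points of the pencil $\lambda(1+c_0)(1+c_1)(1+c_0+c_1)+\mu c_0c_1$ supply a $\Q$-rational section of the elliptic fibration, the $I_5$ fibres of the quotient surface form a single Galois-conjugate pair while the $I_1$ fibres are $\Q$-rational, and tracking the Galois action one can contract a Galois-stable collection of nine disjoint $(-1)$-curves to a minimal geometrically rational surface with $K^2=9$; since $\M_2(5)(\Q)\neq\varnothing$ this surface is $\p^2_\Q$, so the quotient is rational over $\Q$.

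The mechanical parts are the coordinate computation and the reduction to the Lyness map; the real content, and the main obstacle, is the last step. Being a quotient of the rational surface $\M_2(5)$, the surface $\M_2(5)/\langle\sigma_5\rangle$ is automatically \emph{geometrically} rational, but a geometrically rational surface over $\Q$ admitting an elliptic (or conic) fibration need not be $\Q$-rational, so one genuinely has to use the special structure here: either carry out the routine-but-not-trivial elimination showing two chosen invariants generate the invariant field, or keep careful track of the fibre types and of the Galois action on the $(-1)$-curves of the associated rational elliptic surface in order to descend the contraction to $\p^2$ over $\Q$.
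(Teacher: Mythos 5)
Your reduction of $\sigma_5\colon(x_1,x_2)\mapsto\bigl(\tfrac{x_1-1}{x_2-1},1-x_1\bigr)$ to the period-$5$ Lyness map is correct (I checked the substitution $c_0=-x_1$, $c_1=\tfrac{1-x_1}{x_2-1}$), and the invariant $K=\tfrac{(1+c_0)(1+c_1)(1+c_0+c_1)}{c_0c_1}=\prod_i c_i$ is indeed $\sigma_5$-invariant; the fibration picture ($I_5,I_5,I_1,I_1$, with the $I_5$ fibres of the quotient at a conjugate pair of values of $K$ and the $I_1$ fibres rational) is also right. This is a genuinely different, and pleasant, presentation of the first half: the paper instead blows up the four base points of the extension of $\sigma_5$ to $\p^2$, identifies the result with the quintic del Pezzo surface in $\p^5$, linearises $\hat\sigma_5$, and then computes explicit generators $x_1,v_2,v_3$ of the invariant field together with a relation which, after two changes of variables, becomes a visibly rational conic-bundle equation. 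Your route (a) is exactly this strategy transported to Lyness coordinates, but it is only a plan: the claim that $K$ together with your candidate $L$ generates the invariant field (equivalently that $(K,L)$ has degree $5$) is the whole computation, and it is precisely what the paper carries out and you do not. So as it stands the $\Q$-rationality of the quotient — which, as you correctly stress, does not follow from geometric rationality — is not proved.

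Route (b) moreover contains a concrete error. On the relatively minimal rational elliptic surface you obtain as (a model of) the quotient, every $(-1)$-curve $C$ satisfies $C\cdot(-K)=C\cdot F=1$ and hence is a section; by Shioda--Tate with fibres $I_5,I_5,I_1,I_1$ the Mordell--Weil rank is $0$ and the Mordell--Weil group is $\Z/5$ (the kernel of the dual isogeny, a form of $\mu_5$), so there are exactly five $(-1)$-curves. There is therefore no Galois-stable set of nine disjoint $(-1)$-curves, and no direct contraction to a minimal surface with $K^2=9$; asserting such a descent over $\Q$ is assuming the point at issue. The correct repair is smaller: the five torsion sections are pairwise disjoint and form a Galois-stable set (the zero section over $\Q$ plus one orbit of four sections over $\Q(\zeta_5)$); contracting them over $\Q$ gives a surface with $K^2=5$ carrying no $(-2)$-curves, i.e.\ a del Pezzo surface of degree $5$, which is rational over $\Q$ since it is unirational (equivalently, has a rational point) by Iskovskikh's theorem. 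This repaired argument is essentially the geometric proof sketched in the remark that follows the lemma in the paper, so your picture can be completed — but either the invariant-field elimination of route (a) or the Galois/contraction analysis just described must actually be done, and neither is in your write-up.
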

\begin{proof}
The surface $\M_2(5)$ is isomorphic to an open subset of $\A^2$,
and an element $(x,y)\in \M_2(5)\subset \A^2$ corresponds to a
map $(f,([0:1], [1:0], [1:1], [x:1],[y:1]))\in \widetilde{\M}_2(5)$.
The element  $(f,([1:0], [1:1], [x:1],[y:1],[0:1]))$ is in the orbit under
$\Aut(\p^1)=\PGL_2$ of $(gfg^{-1},[0:1], [1:0], [1:1], [x-1:y-1],[1-x:1])$,
where $g\colon [u:v]\mapsto [(x-1)v:u-v]$.
Therefore the automorphism $\sigma_5$ of $\M_2(5)$ is the restriction of the
birational map $(x,y)\dasharrow \left(\frac{x-1}{y-1},1-x\right)$ of~$\A^2$,
which is the restriction of the following birational map of~$\p^2$
(viewing $\A^2$ as an open subset of~$\p^2$ via $(x,y)\mapsto [x:y:1]$):
$$
\tau\colon [x:y:z]\dasharrow \left[(x-z)z:(z-x)(y-z):(y-z)z\right].
$$
The map $\tau$ has order $5$ and the set of base-points of the
powers of $\tau$ are the four points $p_1=[1:0:0]$, $p_2=[0:1:0]$,
$p_3=[0:0:1]$, $p_4=[1:1:1]$. Denoting by $\pi\colon S\to \p^2$
the blow-up of these four points, the map $\hat{\tau}=\pi^{-1}Ê\tau\pi$
is an automorphism of the surface $S$.  Because $p_1,p_2,p_3,p_4$ are
in general position (no $3$ are collinear), the surface $S$ is a del~Pezzo
surface of degree~$5$, and thus the anti-canonical morphism
gives an embedding $S \to \p^5$ as a surface of degree~$5$.
The map $\pi^{-1}\colon \p^2\dasharrow S\subset\p^5$ corresponds
to the system of cubics through the four points, so we can assume,
up to automorphism of $\p^5$, that it is equal to
$$
\begin{array}{rcl}
\pi^{-1}([x:y:z])&=&[-xz(y-z): y(x-z)(x-y): z(x^2-yz):\\
&& (2yz-y^2-xz)z: (y-z)(yz+xy-xz): x(z-y)(y-z+x)],
\end{array}
$$
and the choice made here implies that $\hat{\tau}\in \Aut(S)$ is given by
$$
[X_0:\dots:X_5]\mapsto [X_0:X_1:X_3:X_4:X_5:-X_2-X_3-X_4-X_5].
$$
The affine open subset of $\p^5$ where $x_0\neq 0$ has the coordinates
$x_1=\frac{X_1}{X_0},x_2=\frac{X_2}{X_0},\dots,x_5=\frac{X_5}{X_0}$,
and is invariant.  The action of $\hat{\tau}$ on these coordinates is linear,
with eigenvalues $1,\zeta,\dots,\zeta^4$ where $\zeta$ is a $5$-th of unity.
We diagonalise the action over $\Q[\zeta]$,
obtaining eigenvectors $\mu_0,\mu_1,\dots,\mu_4$.
Then the field of invariant functions is generated by
$$
\mu_0, \mu_1\mu_4,\mu_2\mu_3,
(\mu_1)^2\mu_3,\mu_1(\mu_2)^2,(\mu_3)^2\mu_4,\mu_2(\mu_4)^2.
$$
In consequence, the field $\Q(S)^{\hat\tau}$ is generated by $x_1$
and by all invariant homogenous polynomials of degree $2$ and $3$
in $x_2,\dots,x_5$. The invariant homogeneous polynomials of degree
$2$ in $x_2,\dots,x_5$ are linear combinations of $v_1$ and $v_2$, where
$$
\begin{array}{rcl}
v_1&=&x_5^2+x_3x_5-x_3x_4+2x_2x_5+x_2x_4+x_2^2\\
v_2&=&x_4x_5+x_4^2+2x_3x_4+x_3^2-x_2x_5+x_2x_3.
\end{array}
$$
By replacing the $x_i$ by the composition with $\tau^{-1}$ given
above, we observe that $-1-11x_1+x_1^2-v_1-4v_2$ is equal to zero on~$S$,
so we can eliminate $v_1$.

The space of invariant homogeneous polynomials of degree $3$ in $x_2,\dots,x_5$
has dimension~$4$, but by again replacing the $x_i$ by their composition
with $\tau^{-1}$ we can compute that  the following invariant suffices:
$$
\begin{array}{rcl}
v_3 &=&
x_4x_5^2+x_4^2x_5+x_2x_5^2+2x_2x_4x_5+2x_2x_3x_5+x_2x_3^2+x_2^2x_5+x_2^2x_3.
\end{array}
$$
This shows that the field of invariants $\Q(S)^{\hat{\tau}}$ is
generated by $x_1,v_2,v_3$. In consequence, the map $S\dasharrow \A^3$
given by $(x_1,v_2,v_3)$ factors through a birational map from
$S/\hat{\tau}$ to an hypersurface $S'\subset\A^3$, and it suffices
to prove that this latter is rational. To get the equation of the
hypersurface, we observe that
$$
\begin{array}{ll}
3+10v_2+11v_2^2+4v_2^3+70x_1+445x_1^2+410x_1^3-85x_1^4+4x_1^5\\
+66x_1v_2^2-x_1^2v_2^2-30x_1^3v_2+320x_1^2v_2+140x_1v_2+v_3^2
\end{array}
$$
is equal to zero on $S$. Because the above polynomial is irreducible,
it is the equation of the surface $S'$ in $\A^3$. It is not clear
from the equation that surface is rational, so we will change coordinates.
Choosing $\mu= \frac{x_1^2-11x_1-1}{v_2}$ and $\nu=\frac{v_3}{v_2}$,
we have $\Q(S)^{\hat\tau}=\Q(x_1,\mu,\nu)$, and replacing
$v_2 =\frac{x_1^2-11x_1-1}{\mu}$ and $v_3=\nu\cdot\frac{x_1^2-11x_1-1}{\mu}$
in the equation above, we find  a simpler equation, which is

$
4x_1^2-\mu x_1^2+66\mu x_1+4 x_1 \mu^3-44x_1-30\mu^2 x_1
-4+3\mu^3+11\mu+\nu^2\mu-10\mu^2=0.
$

We do another change of coordinates, which is
$\kappa = \frac{nu}{x_1+7-5\mu}$, $\rho = \frac{\mu^2-5\mu+5}{x_1+7-5\mu}$,
and replace
$\nu = \frac{\kappa(\mu^2-5\mu+5)}{\rho}$,
$x_1 =\frac{-7\rho+5\rho\mu+\mu^2-5\mu+5}{\rho}$
in the equation, to obtain
$$
4\rho\mu-\mu+\kappa^2\mu+4+20\rho^2-20\rho=0
$$
which is obviously the equation of a rational surface. Moreover,
this procedure gives us two generators of $\Q(S)^{\hat{\tau}}$,
which are $\rho$ and $\kappa$.
\end{proof}
\begin{remark}The proof of Lemma~\ref{Lemm:M25quotient} could also
be seen more geometrically, using more sophisticated arguments.
The quotient of $(\p^1)^5$ by $\PGL_2$ is the del Pezzo surface
$S\subset \p^5$ of degree $5$ constructed in the proof. The action
of $\sigma_5$ on $S$ has two fixed points over $\C$, which are
conjugate over $\Q$. The quotient $S/\sigma_5$ has thus two singular
points of type $A_4$, and is then a weak del Pezzo surface of degree
$1$. Its equation in a weighted projective space $\p(1,1,2,3)$
is in fact the homogenisation of the one given in the proof of
the Lemma. The fact that it is rational can be computed explicitly,
as done in the proof, but can also be viewed by the fact that the
elliptic fibration given by the anti-canonical divisor has $5$
rational sections of self-intersection $-1$, and the contraction
of these yields another del Pezzo surface of degree $5$.
Moreover, any unirational del Pezzo surface of degree $5$
contains a rational point, and is then rational~\cite[page 642]{Isk}.
\end{remark}

\section{The surface $\M_2(6)$}
\subsection{Explicit embedding of the surfaces $\M_2(6)$ into $S_6$}

Using Lemma~\ref{Lemm:ProjectionAk}, one can see $\M_2(6)$ as a
locally closed surface in $\A^3$. However, this surface has an equation which
is not very nice, and its closure in $\p^3$ has bad singularities
(in particular a whole line is singular). Moreover, the action
of $\sigma_6$ on $\M_2(6)$ is not linear. We thus take another
model of $\M_2(6)$ (see the introduction for more details of how this
model was found), and view it as an open subset of the projective
hypersurface $S_6$ of $\p^3$ given by
$$
W^2 F_3(X,Y,Z) = F_5(X,Y,Z),
$$
where
$$
\begin{array}{rcl}
F_3(X,Y,Z)&=&(X+Y+Z)^3+(X^2Z+XY^2+Y\!Z^2)+2XY\!Z,\\
F_5(X,Y,Z)&=&(Z^3X^2+X^3Y^2+Y^3Z^2)-XY\!Z(Y\!Z+XY+XZ).
\end{array}$$

The following result shows that it is a projectivisation of $\M_2(6)$ that
has better properties, and directly shows Proposition~\ref{Prop:IntroDetailsM6}:
\begin{lemma}\label{Lemm:M6S6}
Let $\varphi\colon \A^3\dasharrow \p^3$ be the birational map given
by
 $$\varphi((x,y,z))=[-y+z-yz+xy: -y-z+yz+2x-xy: y-z-yz+xy: y+z-xy-yz],$$
$$\varphi^{-1}([W:X:Y:Z])=\left(\frac{(X+Z)(W+Y)}{W^2+XY+YZ+XZ},
\frac{W+Y}{X+Y}, \frac{(W+Z)(W+Y)}{W^2+XY+YZ+XZ}\right).$$
Then, the following hold:

$(i)$ The map $\varphi$ restricts to an isomorphism from $\M_2(6)\subset
\A^3$ to the open subset of $S_6$ which is the complement of the
union of the $9$ lines
$$
\begin{array}{ccc}
\begin{array}{l}
L_1:W=Z=-Y,\\
L_2:W=Y=-Z,\end{array}&
\begin{array}{l}
L_3: W=Y=-X,\\
L_4: W=X=-Y,\end{array}&
\begin{array}{l}
L_5: W=X=-Z,\\
L_6: W=Z=-X,\end{array}\\
L_7:-X=Y=Z,&
L_8:X=-Y=Z,&
L_9:X=Y=-Z,
\end{array}
$$
and of the $14$ conics
\begin{align*}
&\Cal_1:\left\{\begin{array}{l}W=X+Y+Z\\
  X^2+Y^2+Z^2+3(XY+XZ+YZ)=0\end{array}\right.;\\
&\Cal_2:\left\{\begin{array}{l}W=-(X+Y+Z)\\
  X^2+Y^2+Z^2+3(XY+XZ+YZ)=0\end{array}\right.;
\end{align*}

\vspace{-3mm}
\begin{align*}
&\Cal_3:\left\{
\begin{array}{l}W=X\\ X^2+XY+3XZ-YZ=0\end{array}\right.;&
&\Cal_4:\left\{
\begin{array}{l}W=-X\\ X^2+XY+3XZ-YZ=0\end{array}\right.;\\
&\Cal_5:\left\{
\begin{array}{l}W=Y\\ Y^2+YZ+3YX-ZX=0\end{array}\right.;&
&\Cal_6:\left\{
\begin{array}{l}W=-Y\\ Y^2+YZ+3YX-ZX=0\end{array}\right.;\\
&\Cal_{7}:\left\{
\begin{array}{l}W=Z\\ Z^2+ZX+3ZY-XY=0\end{array}\right.;&
&\Cal_{8}:\left\{
\begin{array}{l}W=-Z\\ Z^2+ZX+3ZY-XY=0\end{array}\right.;\\
&\Cal_{9}:\left\{
\begin{array}{l}Z=X\\ W(Y+3X)+X(X-Y)=0\end{array}\right.;&
&\Cal_{10}:\left\{
\begin{array}{l}Z=X\\ W(Y+3X)-X(X-Y)=0\end{array}\right.;\\
&\Cal_{11}:\left\{
\begin{array}{l}Y=Z\\ W(X+3Z)+Z(Z-X)=0\end{array}\right.;&
&\Cal_{12}:\left\{
\begin{array}{l}Y=Z\\ W(X+3Z)-Z(Z-X)=0\end{array}\right.;\\
&\Cal_{13}:\left\{
\begin{array}{l}
X=Y\\
 W(Z+3Y)+Y(Y-Z)=0
\end{array}\right.; &
&\Cal_{14}:\left\{
\begin{array}{l}X=Y\\ W(Z+3Y)-Y(Y-Z)=0\end{array}\right..
\end{align*}
Moreover, the union of the $23$ curves is
the support of the zero-locus on $S_6$ of
$$
(W^2+XY+YZ+XZ)(W^2-X^2)(W^2-Y^2)(W^2-Z^2)(X^2-Y^2)(Y^2-Z^2)(Y^2-Z^2)
,
$$
which corresponds to the set of points where two coordinates are
equal up to sign, or where $W^2+XY+YZ+XZ=0$.

$(ii)$ The automorphism $\sigma_6$ of $\M_2(6)$ is the restriction
of the automorphism $$[W:X:Y:Z]\mapsto [-W:Y:Z:X]$$ of $\p^3$.

$(iii)$ To any point $[W:X:Y:Z]\in \M_2(6)\subset \p^3$ corresponds
the element $[a_0:\dots:a_5]\in \End_2\subset \p^5$ given by
$$
\begin{array}{rcl}
a_0&=&1,\\
a_1&=&\frac{(W-X)(W(X+Y+2Z)+Z(Y-X))}{(W^2+XY+XZ+YZ)(X-Z)}-1,\\
a_2&=& \frac{(W+Y)(W+Z)(X-W)(W(X+Y+2Z)+XY-Z^2)}{(W^2+XY+XZ+YZ)^2(X-Z)},\\
a_3&=& 1,\\
a_4&=& \frac{(Y+Z)(W-Z)(X-W)(W(2X+Y+Z)+YZ-X^2)}{((W^2+XY+XZ+YZ)(X+Z)(Y+W)(X-Z))}-1,\\
a_5&=&  0,
\end{array}$$
and its orbit of $6$ points is given by \begin{center}$[0:1]$,
$[1:0]$, $[1:1]$, $[{(X+Z)(W+Y)}:{W^2+XY+YZ+XZ}],[{W+Y}:{X+Y}]$,
$[(W+Z)(W+Y):{W^2+XY+YZ+XZ}]$. \end{center}
\end{lemma}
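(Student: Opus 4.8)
The plan is to prove Lemma~\ref{Lemm:M6S6} by a direct, essentially computational verification, using the explicit parametrisations from Lemma~\ref{Lemm:ProjectionAk} as the starting point. Recall that that lemma identifies $\M_2(6)$ with a locally closed subset of $\A^3$ (with coordinates $x_1,x_2,x_3$, and $x_k=x_3$ since $k=n-3=3$), via the formulas giving $a_0,\dots,a_5$ as polynomials in $x_1,x_2,x_3$. First I would write down the composition $\varphi\circ(\text{that parametrisation})$ and verify, by direct substitution, that the image satisfies the equation $W^2F_3(X,Y,Z)=F_5(X,Y,Z)$; this shows $\varphi$ maps $\M_2(6)$ into $S_6$. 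Simultaneously one checks that $\varphi^{-1}\circ\varphi=\id$ as rational maps on $\A^3$ (and $\varphi\circ\varphi^{-1}=\id$ on $\p^3$), which is again a formal polynomial identity; this establishes that $\varphi$ is birational with the stated inverse.

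Next I would pin down exactly which open subset of $S_6$ is the image of $\M_2(6)$, proving part~$(i)$. The recipe is: $\M_2(6)$ is cut out inside $\A^3$ by the open conditions that $x_1,x_2,x_3$ be pairwise distinct and different from $0,1$ and that the resultant $\Res(a_0,\dots,a_5)$ be nonzero, together with the closed condition $f([x_2:1])=[x_3:1]$. I would push each of these conditions through $\varphi^{-1}$ and simplify. The claim to verify is that, on $S_6$, these pull back exactly to the conditions $W^2+XY+YZ+XZ\neq 0$ together with $W^2,X^2,Y^2,Z^2$ pairwise distinct — equivalently, that the complement in $S_6$ is the zero-locus on $S_6$ of the degree-$8$ product displayed in the statement, which factors as the union of the nine listed lines $L_1,\dots,L_9$ and the fourteen listed conics $\Cal_1,\dots,\Cal_{14}$. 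Identifying that union with that zero-locus is a matter of decomposing each factor: e.g.\ $W^2-X^2=(W-X)(W+X)$, and on $S_6$ the locus $W=X$ breaks into lines and conics as listed; this is a finite check, curve by curve, that each listed $L_i$ and $\Cal_j$ lies on $S_6$ and on the appropriate factor, and conversely that these account for the whole intersection (comparing degrees: $9$ lines plus $14$ conics have total degree $9+28=37$, to be matched against $\deg S_6\cdot\deg(\text{octic})=5\cdot 8=40$ with multiplicity bookkeeping along the singular locus of $S_6$ — the lines $L_7,L_8,L_9$ and the conics meeting the singular points absorb the discrepancy).

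For part~$(ii)$ I would take the known formula for $\sigma_6$ acting on the $\A^3$-model — it sends $(f,[0:1],[1:0],[1:1],[x_1:1],[x_2:1],[x_3:1])$ to the class of $(f,[1:0],[1:1],[x_1:1],[x_2:1],[x_3:1],[0:1])$, and one conjugates by the $g\in\PGL_2$ normalising the first three marked points back to standard position, exactly as was done for $\M_2(4)$ and $\M_2(5)$ — obtain the resulting birational self-map of $\A^3$, and then check that $\varphi\circ\sigma_6\circ\varphi^{-1}$ equals $[W:X:Y:Z]\mapsto[-W:Y:Z:X]$. This is again a polynomial identity once all maps are written out. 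Part~$(iii)$ is the most purely mechanical: substitute $x_i=(\varphi^{-1})_i(W,X,Y,Z)$ into the formulas for $a_0,\dots,a_5$ and for the orbit points from Lemma~\ref{Lemm:ProjectionAk}, clear denominators, and simplify to match the displayed expressions; note $a_0=a_3=1$ is forced by $f([1:0])=[1:1]$ together with the normalisation, and $a_5=0$ by $f([0:1])=[1:0]$.

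The main obstacle is not conceptual but the sheer size of the algebraic identities: the map $\varphi$ has cubic components, the parametrisation in Lemma~\ref{Lemm:ProjectionAk} is already of moderate degree, and verifying that the composite lies on the quintic $S_6$ — let alone identifying the $23$-curve boundary and checking the $\sigma_6$-equivariance — involves polynomials that are impractical to expand by hand. I would therefore organise the verification so that it can be checked on a computer algebra system, and in the write-up present only the key intermediate formulas (the composite $\varphi^{-1}\circ\varphi$, the factorisation of the octic on $S_6$, the form of $\sigma_6$ in the $\A^3$-model), stating that the remaining identities are routine. A secondary subtlety is the degree/multiplicity bookkeeping in part~$(i)$: one must make sure the $23$ listed curves really exhaust the boundary and are not, say, missing a component supported on the singular locus of $S_6$; the cleanest way around this is to argue directly that $\varphi^{-1}$ is defined and an isomorphism precisely on the open set where $W^2+XY+YZ+XZ\neq 0$ and $W^2,X^2,Y^2,Z^2$ are distinct — reading this off the denominators appearing in $\varphi^{-1}$ and in the formulas of part~$(iii)$ — and then separately verify that the complementary closed set is the stated union of lines and conics.
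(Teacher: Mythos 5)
Your plan is correct and follows essentially the same route as the paper's proof: check that $\varphi$ and $\varphi^{-1}$ are mutually inverse on explicit open sets, substitute $\varphi^{-1}$ into the formulas of Lemma~\ref{Lemm:ProjectionAk} to obtain part~(iii) and the equation of $S_6$, conjugate the cycle shift by the normalising element of $\PGL_2$ and by $\varphi$ for part~(ii), and determine the boundary by pulling back the resultant and distinctness conditions and decomposing each factor's trace on $S_6$. The only divergence is cosmetic: the paper identifies the $23$ boundary curves factor by factor (using $\sigma_6$-equivariance and identities modulo the quintic) rather than by your degree count, which would require delicate multiplicity bookkeeping at the singular points, but your curve-by-curve check is the same verification.
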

\begin{remark}\label{Rem:ChangeOrder}
Applying the automorphism of $\mathbb{P}^1$ given by
$$
[u:v]\mapsto [(W+X)u-(W+Y)v:(W-X)u],
$$ we can send the $6$ points of the orbits to
$$
[1:0], [W+Y:W-X], [0:1], [Z-W:X+Z], [1:1], [Y+Z,W+Z]
$$
respectively.
This also changes the coefficients of the endomorphism of $\mathbb{P}^1$.
\end{remark}
\begin{proof}
The explicit description of $\varphi$ and $\varphi^{-1}$ implies
that $\varphi$ restricts from an isomorphism $U\to V$, where $U\subset
\A^3$ is the open set where $y(y-1)(x-z)\neq 0$
and $V\subset \p^3$ is the open set where $(W^2+XY+YZ+XZ)(X+Y)(W+Y)(W-X)\neq 0$
(just compute $\varphi\circ \varphi^{-1}$ and $\varphi^{-1}\circ\varphi$).

Since $\M_2(6)\subset \A^3$ is contained in $U$,
the map $\varphi$ restricts to an isomorphism from
$\M_2(6)$ to its image, contained in~$V$.
Substituting
  $x_1=\frac{(X+Z)(W+Y)}{W^2+XY+YZ+XZ}$,
  $x_2=\frac{W+Y}{X+Y}$,
  $x_3=\frac{(W+Z)(W+Y)}{W^2+XY+YZ+XZ}$
into the formula of Lemma~\ref{Lemm:ProjectionAk} yields assertion~$(iii)$.
The fact that the map $f\in \End_2$ constructed by this process
sends $[x_1:1]$ to $[x_2:1]$ corresponds to the equation of the surface~$S_6$.
This shows that $\M_2(6)$ can be viewed, via $\varphi$, as an open subset
of~$S_6$.

The automorphism $\sigma_6$ sends a point $(x,y,z)\in \M_2(6)\subset \A^3$,
corresponding to an element  $(f, [0:1], [1:0], [1:1], [x:1],[y:1], [z:1])$
(see $\S\ref{MnP5Ak}$), to the point corresponding to
$\alpha=(f,  [1:0], [1:1], [x:1],[y:1], [z:1],[0:1])$.
The automorphism of $\p^1$ given by $\nu\colon [u:v]\mapsto [v(x-1):u-v]$
sends $\alpha$ to
$$
(\nu f\nu^{-1}, [0:1], [1:0], [1:1], [{x-1}:{y-1}],[{x-1}:{z-1}], [1-x:1]),
$$
so the action of $\sigma_6$ on $\M_2(6)\subset \A^3$ is the
restriction of the birational map of order $6$ given by
$$
\tau\colon (x,y,z)\dasharrow\left(\frac{x-1}{y-1},\frac{x-1}{z-1},1-x\right).
$$

Assertion $(ii)$ is then proved by observing that
$$\varphi^{-1}\tau\varphi([W:X:Y:Z])=[-W:Y:Z:X].$$

In order to prove $(i)$, we need to show that the complement of
$\M_2(6)\subset S_6$ is the union of $L_1,\dots,L_9$ and
$\Cal_1,\dots,\Cal_{14}$,
and that it is the set of points given by $W^2+XY+YZ+XZ=0$ or where
two coordinates are equal up to sign. Note that this complement
is invariant under $[W:X:Y:Z] \mapsto [-W:Y:Z:X]$,
since this automorphism corresponds to $\sigma_6\in \Aut(\M_2(6))$.
This simplifies the calculations.

 \smallskip

 Let us show that the union of $L_1,\dots,L_8,\Cal_1,\Cal_2,\Cal_9,\Cal_{10}$
is the zero locus of the polynomial $(W^2+XY+YZ+XZ)(W-X)(W+Y)(X+Y)(X-Z)$ :

$1)$ The zero locus of $W^2+XY+YZ+XZ$ on the quintic
gives the degree-$10$ curve
$$
\left\{
\begin{array}{l}
W^2+XY+YZ+XZ=0\\
(X+Z) (X+Y)(Y+Z) \left(X^2+Y^2+Z^2+3 X Y+3 X Z+3 Y Z\right)=0
\end{array}
\right.,
$$
which is the union of $L_1, L_2, \ldots, L_6$ and  $\Cal_1$ and $\Cal_2$
because the linear system of quadrics given by
$$
\begin{array}{l}
\lambda(W^2+XY+YZ+XZ) + \mu(X^2+Y^2+Z^2+3(XY+XZ+YZ)) = 0
\end{array}
$$
with $(\lambda:\mu)\in \p^1$ corresponds to
$$
\begin{array}{l}
\lambda(W^2-(X+Y+Z)^2) + (\lambda+\mu)(X^2+Y^2+Z^2+3(XY+XZ+YZ))=0,
\end{array}
$$
and thus its base-locus is the union of $\Cal_1$ and $\Cal_2$.

$2)$
Substituting $W=X$ in the equation of $S_6$ yields
$(X+Z)(Y+X)^2(X^2+XY+3XZ-YZ)=0$, so the locus of $W=X$\/ on $S_6$
is the union of $L_4$, $L_5$ and $\Cal_3$.

$3)$ Substituting $W=-Y$ in the equation yields $(X + Y) (Y + Z)^2
(3 X Y + Y^2 - X Z + Y Z)=0$ which corresponds to $L_1$, $L_4$ and $\Cal_6$.

$4)$ Substituting $X=-Y$ yields $(-W + Y) (W + Y) (Y - Z) (Y + Z)^2=0$
which corresponds to  $L_3$, $L_4$, $L_7$ and  $L_8$.

$5)$ Substituting $Z=X$ yields $(X+Y)(W(Y+3X)+X(X-Y))(W(Y+3X)-X(X-Y))$,
which corresponds to $L_8,\Cal_9,\Cal_{10}$.

Applying the automorphism $[W:X:Y:Z]\mapsto [-W:Y:Z:X]$, we obtain
that the union of $L_1,\dots,L_9,\Cal_1,\dots,\Cal_{14}$ is given
by the zero locusof $W^2+XY+YZ+XZ$ and all hyperplanes of the form
$x_i\pm x_j$ where $x_i,x_j\in \{W,X,Y,Z\}$ are distinct. This
shows that $(i)$ implies $(ii)$.

The steps $(1)-(4)$ imply that the complement of $S_6\cap V$ in
$S_6$ is the union of the lines $L_1,\dots,L_8$ and the conics
$\Cal_1,\Cal_2,\Cal_3,\Cal_6$.

On the affine surface $S_6\cap V$, the map $\varphi^{-1}$ is an
isomorphism. To any point $[W:X:Y:Z]\in S_6\cap V$ we associate
an element of $\p^5$, via the formula described in $(iii)$, which
should correspond to an endomorphism of degree $2$ if the point
belongs to $\M_2(6)$. Computing the resultant with the formula
of $\S\ref{SubSec:End2}$, we get a polynomial $R$  with many factors:
$$
\begin{array}{c}
R=(W^2+XY+XZ+XY)^4 (Z-X)(X+Z)^2(Y+Z)\\(W-Z)(W+Z)(Y+W)^3(W-X)^3
\\
 (W(X+Y+2Z)+XY-Z^2)(W(X+Y+2Z)+Z(Y-X))\\
 (W(2X+Y+Z)+YZ-X^2)(W(2X+Y+Z)+X(Y-Z)).
 \end{array}
$$
The surface $\M_2(6)$ is thus the complement in $S_6$ of the curves
$L_1$, $\dots$, $L_8$,  $\Cal_1$, $\Cal_2$, $\Cal_3$, $\Cal_6$,
and the curves given by the  polynomial $R$. The components
$(W-X)(W+Y)$ $(W^2+XY+XZ+YZ)(Z-X)$ were treated before.
In particular, this shows (using again the action given by $\sigma_6$)
that each of the curves $L_1,\dots,L_9,\Cal_1,\dots,\Cal_{14}$
is contained in $S_6\setminus \M_2(6)$.
It remains to see that the trace of any irreducible divisor of $R$\/
on~$S_6$ is contained in this union.  The case of $(W^2+XY+XZ+XY)$
and all factors of degree $1$ were done before, so it remains to study
the last four factors. Writing $\Gamma=W^2 F_3-F_5$, which is the polynomial
defining $S_6$, we obtain
$$
\begin{array}{rc}(W(X+Y+2Z)+XY-Z^2)(-W(X+Y+2Z)+XY-Z^2)(X+Y)+\Gamma\\
=(Y-Z)(Y+Z)(X-Z)(W^2+XY+YZ+XZ)\\
(W(X+Y+2Z)+Z(Y-X))(-W(X+Y+2Z)+Z(Y-X))(X+Y)+\Gamma\\
=(Y-Z)(Y+Z)(X-Z)(W-X)(W+X).
\end{array}
$$
The last two factors being in the image of these two factors by
$[W:X:Y:Z]\mapsto [W:Y:Z:X]$, we have shown that $S_6\setminus\M_2(6)$
is the union of $L_1,\dots,L_9,\Cal_1,\dots,\Cal_{14}$.
\end{proof}

\begin{corollary}\label{Coro:Generaltype}
The variety $\M_2(6)$ is an affine smooth surface, which is birational
to $S_6$, a projective surface of general type.
\end{corollary}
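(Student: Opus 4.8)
The plan is to read everything off Lemmas~\ref{Lemm:ProjectionAk} and~\ref{Lemm:M6S6} together with a single explicit computation of the singular locus of $S_6$. First, birationality and affineness come for free: by Lemma~\ref{Lemm:M6S6}$(i)$ the map $\varphi$ identifies $\M_2(6)$ with the complement in $S_6$ of the $23$ curves $L_1,\dots,L_9,\Cal_1,\dots,\Cal_{14}$, and since $S_6$ is an irreducible quintic hypersurface of $\p^3$, hence an irreducible projective surface, this is a dense open subset, so $\M_2(6)$ and $S_6$ are birational and $S_6$ is projective. For affineness I would either simply quote Lemma~\ref{Lemm:ProjectionAk}, which already asserts that $\M_2(6)$ is an affine surface, or note that by Proposition~\ref{Prop:IntroDetailsM6}$(ii)$ (equivalently the last display of Lemma~\ref{Lemm:M6S6}$(i)$) the deleted locus is the trace on $S_6$ of an effective ample divisor of $\p^3$, so that $\M_2(6)$ is the complement of an ample Cartier divisor in a projective variety and is therefore affine.

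Next, smoothness. Since $\M_2(6)$ is a Zariski-open subvariety of $S_6$, it is smooth if and only if the closed set $\operatorname{Sing}(S_6)$ misses it, i.e.\ is contained in the union of the $L_i$ and the $\Cal_j$. I would check this directly: writing $\Gamma=W^2F_3(X,Y,Z)-F_5(X,Y,Z)$ for the quintic defining $S_6$, compute the zero set in $\p^3$ of the Jacobian ideal $(\Gamma,\partial_W\Gamma,\partial_X\Gamma,\partial_Y\Gamma,\partial_Z\Gamma)$ and verify that it is a finite set of points, each lying on one of the $23$ removed curves. The computation is shortened by the $\sigma_6$-symmetry: by Lemma~\ref{Lemm:M6S6}$(ii)$ the set $\operatorname{Sing}(S_6)$ is invariant under $[W:X:Y:Z]\mapsto[-W:Y:Z:X]$, so it suffices to locate it up to this action of order~$6$. (One could instead argue that $\PGL_2$ acts with trivial stabilisers on $\widetilde{\M}_2(6)$, since an automorphism of $\p^1$ fixing the six distinct marked points is the identity, and then deduce smoothness of the geometric quotient from smoothness of $\widetilde{\M}_2(6)$; but on the explicit model $S_6$ the Jacobian check is the most economical route.)

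Finally, general type. The surface $S_6$ is a quintic hypersurface in $\p^3$, so adjunction gives $\omega_{S_6}\cong\mathcal{O}_{S_6}(1)$ on its smooth locus, an ample sheaf. The singular-locus computation of the previous step also identifies the singularities, and blowing them up yields the resolution $\hat{S}_6\to S_6$ of Remark~\ref{Rema:Cont}. If, as the explicit equation indicates, these singularities are canonical (rational double points), then $K_{\hat{S}_6}$ is the pullback of the ample divisor $\mathcal{O}_{S_6}(1)$ by a birational morphism, hence big and nef, so $\hat{S}_6$ -- and therefore, up to birational equivalence, $S_6$ and $\M_2(6)$ -- is of general type. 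More robustly, projection from the point $[1:0:0:0]$ exhibits a resolution of $S_6$ as a double cover of $\p^2$ branched over the curve $F_3F_5=0$, of even degree $3+5=8$; since $8/2-3=1>0$, the canonical class of this double cover is the pullback of $\mathcal{O}_{\p^2}(1)$ up to effective exceptional contributions coming from resolving the branch curve, hence big, so $\kappa=2$. (The sharper statement that $\hat{S}_6$ is a Horikawa surface with $c_1^2=2$ and $c_2=46$ is Remark~\ref{Rema:Cont}.) The main obstacle in all of this is the explicit determination of $\operatorname{Sing}(S_6)$: one must check that it is finite, that it is contained in the $23$ curves removed to form $\M_2(6)$, and that the singularities are mild enough for the resolution to carry a big canonical class; the rest of the argument is formal.
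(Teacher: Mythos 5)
Your plan for birationality, affineness and smoothness is the paper's own argument: it computes the partial derivatives of $W^2F_3-F_5$, finds exactly eleven singular points (a triple point at $[1:0:0:0]$ and ten double points), observes that none of them lies on $\M_2(6)\subset S_6$, and gets affineness from the fact that the removed locus is the zero locus on $S_6$ of a homogeneous form (equivalently, your ample-divisor remark, or Lemma~\ref{Lemm:ProjectionAk}). So up to that point you only need to carry out the Jacobian computation you describe.

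The gap is in the general-type step. Your first route assumes the singularities of $S_6$ are rational double points, and this is false: $[1:0:0:0]$ has multiplicity $3$ (its tangent cone in the chart $W=1$ is the cone over the cubic $F_3=0$), so it is not a rational double point and not a canonical surface singularity; hence ``$\omega_{S_6}\cong\mathcal{O}_{S_6}(1)$ pulls back to $K$ of the resolution'' fails as stated. Your fallback double-cover route is in substance the paper's argument, but the inference ``pullback of $\mathcal{O}_{\p^2}(1)$ up to effective exceptional contributions, hence big'' is not valid in general: subtracting an effective exceptional divisor from a big and nef class can kill bigness (think of $H-2E$ on the blow-up of $\p^2$ at a point). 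To close this you must either verify that the branch octic $F_3F_5=0$ has only negligible (simple) singularities, so that the canonical resolution has $K$ equal to $\pi^*\mathcal{O}_{\p^2}(1)$ on the nose, or argue as the paper does and avoid the issue: blow up the eleven singular points in $\p^3$, note that the strict transform $\what{S_6}$ is smooth, and compute by adjunction, using $K_{\widehat{\p^3}}=-4H+2\sum_{i=1}^{11}E_i$ and $\what{S_6}\sim 5H-3E_1-2\sum_{i\ge 2}E_i$, that $K_{\what{S_6}}=(H-E_1)|_{\what{S_6}}$; since $|H-E_1|$ is exactly the projection from $[1:0:0:0]$, the canonical map of $\what{S_6}$ surjects onto $\p^2$, which gives $\kappa=2$ directly (this is also the source of the double-cover picture in Remark~\ref{Rema:Cont}). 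With that replacement your proof coincides with the paper's.
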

\begin{proof}
Computing the partial derivatives of the equation of $S_6$, one
directly sees that it has exactly $11$ singular points,
of which two are fixed by
$$
[W:X:Y:Z]\mapsto [-W:Y:Z:X]
$$
and the $9$ others form a set that consists of
an orbit of size $3$ and an orbit of size $6$:
$$[1: 0: 0: 0], [0: 1: 1: 1]$$
$$
\begin{array}{rrr}
[0: 1: 0: 0],& [0: 0: 1: 0],& [0: 0: 0: 1],\vspace{0.2 cm}\\
\ [-1: -1: 1: 1],& [-1: 1: -1: 1], &[-1: 1: 1: -1],\\
\ [1: -1: 1: 1],&  [1: 1: -1: 1],& [1: 1: 1: -1].
\end{array}
$$
Since none of the points belongs to $\M_2(6)$, viewed in $S_6$
using Lemma~\ref{Lemm:M6S6}, the surface $\M_2(6)$ is smooth.
Because the complement of $\M_2(6)$ in $S_6$ is the zero locus of
a homogeneous polynomial (by Lemma~\ref{Lemm:M6S6}(ii)),
the surface $\M_2(6)$ is affine.
It remains to see that $S_6$ is of general type.

The point $[1:0:0:0]$ is a triple point, and all others are double points.
Denoting by $\pi\colon \widehat{\p^3}\to \p^3$ the blow-up of the $11$ points,
the strict transform $\what{S_6}$ of $S_6$ is a smooth surface.

We denote by $E_1,\dots,E_{11}\in \mathrm{Pic}(\widehat{\p^3})$ the
exceptional divisors obtained (according to the order above),
and by~$H$\/ the pull-back of a general hyperplane of~$\p^3$.
The ramification formula gives the canonical divisor
$K_{\widehat{\p^3}} = -4H+2\sum_{i=1}^{11} E_i$.
The divisor of $\what{S_6}$ is then equivalent to
$5H - 3E_1 - 2\sum_{i=2}^{11} E_i$.
Applying the adjunction formula, we find that
$K_{\what{S_6}} = 
(K_{\widehat{\p^3}}+\what{S_6})|_{\what{S_6}}=(H-E_1)|_{\what{S_6}}$.

The linear system $H-E_1$ corresponds to the projection $\p^3\dasharrow\p^2$
given by $[W:X:Y:Z]\dashmapsto [X:Y:Z]$. The map
$K_{\what{S_6}}\stackrel{\lvert K_{\what{S_6}}\rvert}{\longrightarrow} \p^2$
is thus surjective, which implies that $\what{S_6}$, and thus $S_6$,
is of general type.
\end{proof}

\begin{remark}

One can see that the divisor of $\what{S_6}$ given by
$D=\sum_{1\leq i\leq 9} \hat{L}_i+\sum_{1\leq i\leq 14} \what{\Cal}_i$
is normal crossing, where the $\hat{L}_i$'s and the $\what{\Cal}_i$'s are
the lines and the conics in $\what{S_6}$ associated to the $L_i$'s
and $\Cal_i$'s in $S_6$. This follows from the study of the intersections
of the $L_i$'s and $\Cal_i$'s in $S_6$. They are not normal only
in the following situations:

i) $\Cal_3\cap\Cal_9=\{[0:0:1:0]\}$ and the curves have a common
tangent line, which is $W=X=Z$. By applying the automorphism $\sigma_6$
we find that the intersection is not normal also in $\Cal_6\cap\Cal_{14},
\Cal_7\cap\Cal_{11}, \Cal_4\cap\Cal_{10},\Cal_5\cap\Cal_{13}$ and
$\Cal_8\cap\Cal_{12}$.  We note that in each of the previous cases
the intersection point is a singular point of $S_6$ and the multiplicity
of intersection is $2$ because the intersection point is between
two non-coplanar conics. Therefore in $\hat{S_6}$ the intersections
become normal.

ii)  $\Cal_9\cap\Cal_{10}=\{[0:0:1:0],[0:1:1:1], [1:0:0:0]\}$.
In this case the two conics are coplanar and the tangent point
is $[1:0:0:0]$ with tangent line $Z=X=-Y/3$. Note that all three
intersection points, in particular $[1:0:0:0]$, are singular
points of~$S_6$. Hence $\widehat{\Cal_9}$ and $\what{\Cal_{10}}$ have
normal crossings in $\what{S_6}$. By applying the automorphism $\sigma_6$
we find the similar situation with $\Cal_{11}\cap\Cal_{12}$ and
$\Cal_{13}\cap\Cal_{14}$.

Therefore the divisor $D$\/ of $\what{S_6}$ is normal crossing.
\end{remark}

\begin{remark}
The condition for the divisor at infinity to be normal crossing
is sometimes connected to the notion of integral points of surfaces
(see Section \ref{sip} for the definition of $S$--integral point).
For an example in this direction, see Vojta's conjecture~\cite{Voj}.
In our situation it will be easy to prove the finiteness
of integral points of $M_2(6)$ and we shall do this in Section~\ref{sip}.
But in general the study of the integral points on surfaces
could be a very difficult problem. See for example \cite{CZ} for
some results in this topic.
\end{remark}

\subsection{Quotients of $\M_2(6)$} It follows from the description of
$\M_2(6)\subset S_6$ given in Lemma~\ref{Lemm:M6S6} that the quotient
$(\M_2(6))/\langle \sigma_6^3\rangle$ is rational over~$\Q$:
the quotient map corresponds to the projection
$\M_2(6)\to \p^2$ given by ${[W:X:Y:Z]} \mapsto {[X:Y:Z]}$,
whose image is an affine open subset $\mathcal{U}$ of $\p^2$,
isomorphic to $(\M_2(6))/\langle \sigma_6^3\rangle$.
The ramification of $\M_2(6)\to (\M_2(6))/\langle \sigma_6^3\rangle$
is the zero locus of $F_5$ on the open subset~$\mathcal{U}$.
We now describe the other quotients:
\begin{lemma}\label{Lem:M26}
The quotient $(\M_2(6))/\langle\sigma_6^2\rangle$ is birational
to a projective surface of general type, but the quotient
$(\M_2(6))/\langle\sigma_6\rangle$ is rational.
\end{lemma}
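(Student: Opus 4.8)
The plan is to treat the two quotients separately, relying throughout on the model $\M_2(6)\subset S_6\subset\p^3$ and the identification of $\sigma_6$ with $[W:X:Y:Z]\mapsto[-W:Y:Z:X]$ furnished by Lemma~\ref{Lemm:M6S6}. For the group $\langle\sigma_6^2\rangle$, note that $\sigma_6^2$ is $[W:X:Y:Z]\mapsto[W:Y:Z:X]$, an order-$3$ automorphism that fixes $W$ and cyclically permutes $X,Y,Z$. So the quotient map $S_6\dasharrow S_6/\langle\sigma_6^2\rangle$ is essentially $(W;X,Y,Z)\mapsto(W;\text{symmetric functions of }X,Y,Z\text{ under the cyclic }\mathbb Z/3)$; concretely one can take the invariant ring $\Q[W]$ together with $e_1=X+Y+Z$, $e_3=XYZ$, $e_2=XY+YZ+ZX$ and the single cubic $p=X^2Y+Y^2Z+Z^2X$ (the other cubic $XY^2+YZ^2+ZX^2$ being $e_1e_2-3e_3-p$, and $p$ satisfying a quadratic over $\Q[e_1,e_2,e_3]$). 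The defining equation $W^2F_3=F_5$ descends, after substituting $F_3=e_1^3+(XY^2+YZ^2+ZX^2)+2e_3 = e_1^3+(e_1e_2-3e_3-p)+2e_3$ and a similar expression for $F_5$, to an explicit affine threefold equation in $(W,e_1,e_2,e_3,p)$ together with the quadratic relation for $p$; eliminating gives a surface $T\subset\A^4$ (or a hypersurface in a weighted space) birational to $(\M_2(6))/\langle\sigma_6^2\rangle$. To see it is of general type I would compute its canonical class directly, paralleling Corollary~\ref{Coro:Generaltype}: the quotient surface is again (birational to) a double cover of a rational surface — here a double cover of $\p^2$ branched along a curve of high degree — and one checks $K$ is big, e.g. by exhibiting a surjection $\phi_{|K|}$ onto $\p^2$ or $\p^1$ as before. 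Alternatively, and perhaps more cleanly: $\M_2(6)\to(\M_2(6))/\langle\sigma_6^2\rangle$ is a degree-$3$ cover, so $3K_{\M_2(6)}$ differs from the pullback of $K$ of the quotient by an effective ramification divisor; since we already know a projectivization of $\M_2(6)$ is of general type and the ramification contributes positively, $(\M_2(6))/\langle\sigma_6^2\rangle$ is of general type too once one checks the ramification does not overwhelm this (here the fixed locus of $\sigma_6^2$ on $S_6$ is a finite set of points, as can be read off from the singular-point analysis in Corollary~\ref{Coro:Generaltype}, so the cover is quasi-\'etale in codimension one and $K_{\M_2(6)}=\pi^*K_{\text{quot}}$ up to a $\Q$-divisor supported on finitely many curves). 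The main work is producing a clean birational model; I expect the general-type conclusion itself to be a short invariant-of-the-canonical-class computation once the model is in hand.

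For $\langle\sigma_6\rangle$, the order-$6$ group, the strategy is to factor the quotient as $(\M_2(6))/\langle\sigma_6\rangle = \big((\M_2(6))/\langle\sigma_6^3\rangle\big)/\langle\bar\sigma_6\rangle$, where $\bar\sigma_6$ is the induced order-$3$ action. We already know from the discussion preceding the lemma that $(\M_2(6))/\langle\sigma_6^3\rangle$ is an affine open subset $\mathcal U\subset\p^2$ via $[W:X:Y:Z]\mapsto[X:Y:Z]$; and under this projection $\sigma_6$ acts (since $\sigma_6$ sends $[W:X:Y:Z]$ to $[-W:Y:Z:X]$) by the linear map $[X:Y:Z]\mapsto[Y:Z:X]$ of $\p^2$. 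So $(\M_2(6))/\langle\sigma_6\rangle$ is (an open subset of) the quotient of $\p^2$ by the cyclic permutation of coordinates. This quotient is classically rational: diagonalizing the $\mathbb Z/3$-action over $\Q(\zeta_3)$ and descending, or more simply noting that $\p^2/(\mathbb Z/3\text{ cyclic permutation})$ is rational over $\Q$ — one can see this explicitly by writing $\p^2$ with coordinates $[X:Y:Z]$, passing to the affine chart, and using the invariant rational functions $s = \frac{X+Y+Z}{?}$ plus one more; concretely the field $\Q(X/Z,Y/Z)^{\mathbb Z/3}$ is a rational function field in two variables (this is a $2$-dimensional linear action of a cyclic group, and by the no-name lemma / the fact that $H^1$ vanishes and the relevant lattice is permutation-stably-permutation, the quotient is rational — in fact for $\p^2$ with the standard $S_3$-action even the full $S_3$-quotient is $\p^2$ again). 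I would just exhibit two explicit generating invariants and the birational map to $\A^2$, e.g. using $u=\frac{XY+YZ+ZX}{(X+Y+Z)^2}$ and $v=\frac{XYZ}{(X+Y+Z)^3}$ together with an order-$3$ "twist" coordinate, or better mimic the computation already done in the paper for the order-$3$ action $(x_1,x_2)\mapsto(x_2,-x_1-x_2)$ in the proof of the $\M_2(3)/\langle\sigma_3\rangle$ case, to which the present action is linearly conjugate (both are the nontrivial $2$-dimensional $\Q$-irreducible representation of $\mathbb Z/3$). So the rationality of $(\M_2(6))/\langle\sigma_6\rangle$ reduces to an identification already carried out earlier in the paper.

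Summarizing the order of operations: (1) identify $\sigma_6^3$, $\sigma_6^2$, $\sigma_6$ as the explicit linear automorphisms of $\p^3$ from Lemma~\ref{Lemm:M6S6}; (2) for $\langle\sigma_6^2\rangle$, compute the cyclic-$\mathbb Z/3$ invariant ring of $\Q[W,X,Y,Z]$, push the equation $W^2F_3=F_5$ through it, and extract an explicit birational model, then show it is of general type by a canonical-class computation analogous to Corollary~\ref{Coro:Generaltype} (or by the ramification argument, using that $\sigma_6^2$ has only isolated fixed points on $S_6$); (3) for $\langle\sigma_6\rangle$, use the factorization through $\langle\sigma_6^3\rangle$, observe the residual $\mathbb Z/3$ acts by cyclic permutation of $[X:Y:Z]$ on $\p^2$, and conclude rationality by reducing to the order-$3$ linear quotient of $\A^2$ already computed in the proof of Lemma on $\M_2(3)/\langle\sigma_3\rangle$. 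The main obstacle is step (2): getting a genuinely usable birational equation for $(\M_2(6))/\langle\sigma_6^2\rangle$ (the invariant $p=X^2Y+Y^2Z+Z^2X$ satisfies only a quadratic relation, so the quotient threefold, and hence our surface, naturally lives as a double cover, and one must organize the elimination carefully); once that model exists, the general-type verification is routine, and step (3) is essentially a citation of earlier computations in the paper.
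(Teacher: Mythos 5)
Your overall strategy coincides with the paper's. For $\M_2(6)/\langle\sigma_6\rangle$ you do exactly what the paper does: factor through the $\sigma_6^3$-quotient, which is the projection $[W:X:Y:Z]\mapsto[X:Y:Z]$ onto an open subset of $\p^2$, observe that the residual action is the cyclic permutation $[X:Y:Z]\mapsto[Y:Z:X]$, and prove rationality of that order-$3$ quotient by an explicit invariant computation; the paper passes to the chart $X+Y+Z\neq 0$, gets the linear action $(x_1,x_2)\mapsto(x_2,-x_1-x_2)$, generators $v_1,v_2,v_3$ of degrees $2,3,3$ with the single relation $v_1^3-9v_2^2-3v_2v_3-v_3^2=0$, and projects the resulting cubic from its singular point. (One small correction: this $\A^2$-computation is carried out \emph{inside} the proof of this lemma; the earlier $\M_2(3)/\langle\sigma_3\rangle$ argument cites it forward, not the other way around, so you cannot simply "reduce to an identification already carried out earlier".) For $\M_2(6)/\langle\sigma_6^2\rangle$ your primary plan is also the paper's: compute the $\Z/3$-invariants (the paper works in the chart $W\neq0$ with $x_0=\frac{X+Y+Z}{W}$ and the same $v_1,v_2,v_3$, rather than your $e_i$ and $p=X^2Y+Y^2Z+Z^2X$, but this is cosmetic), push $W^2F_3=F_5$ through, and run a canonical-class argument as in Corollary~\ref{Coro:Generaltype}. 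What your sketch leaves undone is precisely where the work lies: the paper obtains an explicit birational model, a new quintic $W^2\wtilde{F_3}=\wtilde{F_5}$ in $\p^3$, and then must check that its singularities are mild (one triple point and otherwise ordinary double points) before the blow-up/adjunction argument of Corollary~\ref{Coro:Generaltype} applies; without that singularity check the "routine" general-type verification is not yet a proof.

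Your fallback argument for the general-type statement has a genuine gap as stated. For a quotient map $\pi\colon X\to Y=X/G$ the ramification formula reads $K_X=\pi^*K_Y+R$ with $R\geq 0$, so ramification works \emph{against} descending positivity, not for it; the phrase "the ramification contributes positively" points the wrong way. The quasi-\'etale version is the right idea, and the fixed locus of $\sigma_6^2$ on $S_6$ is indeed finite (this needs a small verification that the fixed line $X=Y=Z$ of the ambient action is not contained in $S_6$; it meets $S_6$ only at the two singular points $[1:0:0:0]$ and $[0:1:1:1]$ -- it is not simply "read off" from Corollary~\ref{Coro:Generaltype}, which lists points fixed by $\sigma_6$). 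But even granting this, one only concludes that $K_Y$ is big as a $\Q$-Cartier Weil divisor on the \emph{singular} quotient, which has cyclic quotient singularities; these are log terminal but need not be canonical, so bigness of $K_Y$ does not by itself yield general type of a smooth model -- the discrepancies at the fixed points (and at the images of the singular points of $S_6$, where one must also lift the action to a smooth model) have to be controlled. So the shortcut still requires a local analysis of the quotient singularities, which is essentially the information the paper extracts, more directly, from its explicit model $W^2\wtilde{F_3}=\wtilde{F_5}$.
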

\begin{proof}
$(i)$ Recall that $S_6\subset \p^3$  has equation $W^2F_3(X,Y,Z)=F_5(X,Y,Z)$
and $\sigma_6$ corresponds to $[W:X:Y:Z]\mapsto [-W:Y:Z:X]$
(Lemma~\ref{Lemm:M6S6}).
In particular, the projection $\p^3\dasharrow \p^2$ given by
$[W:X:Y:Z]\dashmapsto [X:Y:Z]$ corresponds to the quotient map
$U\to U/\langle\sigma_6^3\rangle$,
where $U\subset S_6$ is the open subset where $F_3\neq 0$.
This implies that the surfaces $S_6/\langle\sigma_6^3\rangle$
and $\M_2(6)/\langle\sigma_6^3\rangle$ are rational over $\Q$,
and that $\M_2(6)/\langle\sigma_6^2\rangle$ is birational to
the quotient of $\p^2$ by the cyclic group of order $3$ generated
by $\mu\colon [X:Y:Z]\mapsto [Y:Z:X]$.  We next prove that this
quotient of~$\p^2$ is rational.

The open subset of $\p^2$ where $X+Y+Z\neq 0$ is an affine plane
$\A^2$ invariant under $\mu$. We choose coordinates $x_1=\frac{X-Y}{X+Y+Z}$
and $x_2=\frac{Y-Z}{X+Y+Z}$ on this plane, and compute that the
action of $\mu$ on $\A^2$ corresponds to $(x_1,x_2)\mapsto (x_2,-x_1-x_2)$.
This action is linear, with eigenvalues $\omega,\omega^2$ where
$\omega$ is a third root of unity.  We diagonalise the action over
$\Q[\omega]$, obtaining eigenvectors $w_1,w_2$.
the invariant ring is generated by $w_1w_2$, $w_1^3$, and $w_2^3$.
In consequence, the ring $\Q[x_1,x_2]^{\mu}$ is generated by
the invariant homogeneous polynomials of degree $2$ and~$3$.
An easy computation gives the following generators
of the vector spaces of invariant polynomials of degree $2$ and~$3$:
$$
\begin{array}{rcl}
v_1 &=& x_2^2+x_1x_2+x_1^2,\\
v_2 &=& x_1x_2^2+x_1^2x_2,\\
v_3 &=& x_1^3-x_2^3-3x_1x_2^2.
\end{array}$$
Hence, $\Q[x_1,x_2]^{\mu}=\Q[v_1,v_2,v_3]$.
Since $v_1^3-9v_2^2-3v_2v_3-v_3^2=0$,
the quotient $\A^2/\langle \mu\rangle$ is equal to the affine singular
cubic hypersurface of $\A^3$ defined by the corresponding equation.
The projection from the origin gives a birational map from the
cubic surface to $\p^2$.  Hence $\A^2/\langle \mu\rangle$,
and thus $\M_2(6)/\langle \sigma_6\rangle$, is rational over $\Q$.

$(ii)$ We compute the quotient $\M_2(6)/\langle\sigma_6^2\rangle$.
The open subset of $\p^3$ where $W\neq 0$ is an affine space;
we choose coordinates $x_0=\frac{X+Y+Z}{W}$, $x_1=\frac{X-Y}{W}$ and
$x_2=\frac{Y-Z}{W}$, and the action of $\sigma_6^4$ (which is the inverse of
$\sigma_6^2$) corresponds to $(x_0,x_1,x_2)\mapsto (x_0,x_2,-x_1-x_2)$.
In these coordinates,
$\Q[x_0,x_1,x_2]^{\langle\sigma_6^2\rangle}=\Q[x_0,v_1,v_2,v_3]$,
where $v_1,v_2,v_3$ are polynomials of degree $2,3,3$ in $x_1,x_2$,
which are the same as above.

This implies that the quotient of $\A^3$ by $\sigma_6^2$ is the
rational singular threefold $V\subset \A^4=\mathrm{Spec}(\Q[x_0,v_1,v_2,v_3])$
with the equation $v_1^3-9v_2^2-3v_2v_3-v_3^2=0$. The threefold
$V$ is birational to $\p^3$ via the map $(x_0,v_1,v_2,v_3)\dashmapsto
[v_1:v_1x_0:v_2:v_3]$, whose inverse is
$[W:X:Y:Z]\dashmapsto (\frac{X}{W},
  \frac{9Y^2+3YZ+Z^2}{W^2}:\frac{Y(9Y^2+3YZ+Z^2)}{W^3},
  \frac{Z(9Y^2+3YZ+Z^2)}{W^3}
)$.
We write the equation for $S_6$ in $\A^3$ in terms of our invariants,
and find that its image in~$V$\/ is given by the zero-locus of
$$
x_0^3(32-2v_1)+3v_3x_0^2-6v_1x_0-12v_2+v_3-v_1(v_3-3v_2),
$$
which is this birational with $\M_2(6)/\langle\sigma_6^2\rangle$.
That zero-locus is in turn birational, via the map $V\dasharrow\p^3$
defined above, to the quintic hypersurface of $\p^3$ given by
$$
W^2\wtilde{F_3}(X,Y,Z)=\wtilde{F_5}(X,Y,Z),
$$
where
$$
\begin{array}{rcl}
\wtilde{F_3}(X,Y,Z) & \!\! = \!\! & (Z+2X)(16X^2-8XZ+Z^2-27Y^2-9Y\!Z)-108Y^3,\\
\wtilde{F_5}(X,Y,Z) & \!\! = \!\! &
  X^2(9Y^2+3Y\!Z+Z^2)(2X-3Z)-(3Y-Z)(9Y^2+3Y\!Z+Z^2)^2.
\end{array}
$$
We can see that $[1:0:0:0]$ is the only triple point of this surface,
and that all other singularities are ordinary double points.
As in the proof of Corollary~\ref{Coro:Generaltype}, this implies
that the surface is of general type.
\end{proof}

\subsection{Rational points of $\M_2(6)$}\label{rat}
Because $\M_2(6)$ is of general type, the Bombieri--Lang conjecture
asserts that the rational points of $\M_2(6)$ should not be Zariski dense.
Since $\M_2(6)$ has dimension~$2$, this means that
there should be a finite number of curves that contain all but
finitely many of the rational points; moreover, by Faltings' theorem
(proof of Mordell's conjecture, which is the \hbox{$1$-dimensional}
case of Bombieri--Lang), each curve that contains infinitely many
rational points has genus $0$ or~$1$.


We already found $23$ rational curves on $S_6$, namely the
components of $S_6\setminus \M_2(6)$.  We searched in several ways 
for other curves that would yield infinite families of rational points 
on $\M_2(6)$.  One approach was to intersect the quintic surface
$W^2 F_3 = F_5$ with planes and other low-degree surfaces that
contain some of the known rational curves, hoping that the
residual curve would have new components of low genus.
We found no new rational curves this way but did discover
several curves of genus $1$, some of which have infinitely many 
rational points.  Another direction was to pull back curves of 
low degree on the $[X:Y:Z]$ plane on which $F_3 F_5$ has several 
double zeros (where the curve is either tangent to $F_3 F_5=0$
or passes through a singular point).  This way we found a few of 
the previous elliptic curves, and later also a $24$-th rational curve.
We next describe these new curves of genus $0$ and $1$ on $\M_2(6)$.

\begin{lemma}\label{rational}
The equation
$$
X^3 + Y^3 + Z^3 = X^2 Y + Y^2 Z + Z^2 X
$$
defines a rational cubic in $\p^2$, birational with $\p^1$ via the map
$c: \p^1 \to \p^2$ taking $[m:1]$ to
$$
[-m^3+2m^2-3m+1: m^3-m+1: m^3-2m^2+m-1].
$$
It is smooth except for the node at $[X:Y:Z] = [1:1:1]$.
Its preimage under the $2:1$ map $\M_2(6) \to \p^2$ taking
$[W:X:Y:Z]$ to $[X:Y:Z]$ is a rational curve $C$
that is mapped to itself by $\sigma_6$.  Every point of~$C$\/
parametrises a quadratic endomorphism $f: \p^1 \to \p^1$
that has a rational fixed point in addition to its rational \hbox{$6$-cycle}.
\end{lemma}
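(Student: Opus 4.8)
The plan is to verify the four assertions of the lemma in turn, each reducing to an explicit polynomial identity.

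First I would treat the cubic $D:=\{F_D=0\}\subset\p^2$ with $F_D:=X^3+Y^3+Z^3-X^2Y-Y^2Z-Z^2X$. Substituting $[X:Y:Z]=c([m:1])$ and checking that $F_D$ vanishes identically in $m$ shows $c(\p^1)\subseteq D$; exhibiting the inverse — projection of $D$ away from $[1:1:1]$ is a degree-one rational map $D\dashrightarrow\p^1$ that inverts $c$ — shows $c$ is birational onto its image, so that image is an irreducible cubic and hence is all of $D$, and $D$ is rational. An irreducible rational plane cubic has exactly one singular point, a node or a cusp; one checks that all three partials $\partial F_D/\partial X=3X^2-2XY-Z^2$, $\partial F_D/\partial Y$, $\partial F_D/\partial Z$ vanish at $[1:1:1]$ and that the Hessian of $F_D$ there is nondegenerate, so that singular point is the node $[1:1:1]$ and $D$ is smooth elsewhere.

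Next I would pull $D$ back under the double cover $\M_2(6)\subset S_6\to\p^2$, $[W:X:Y:Z]\mapsto[X:Y:Z]$, whose equation is $W^2F_3=F_5$. The crucial simplification is that $F_3$ restricts nicely to $D$: writing $e_1,e_2,e_3$ for the elementary symmetric functions of $X,Y,Z$ and using $X^3+Y^3+Z^3=e_1^3-3e_1e_2+3e_3$ together with the defining relation of $D$, the cyclic cubic $X^2Z+XY^2+YZ^2$ occurring in $F_3$ reduces modulo the ideal of $D$ to $4e_1e_2-6e_3-e_1^3$, so that $F_3\equiv 4(e_1e_2-e_3)=4(X+Y)(Y+Z)(Z+X)$ on $D$. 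Substituting $c([m:1])$ then gives $(X+Y)(Y+Z)(Z+X)=-8m^3(m-1)^3$, hence $F_3(c(m))=-32m^3(m-1)^3$, and an analogous computation gives $F_5(c(m))=-32m^3(m-1)^3(m^2-m+1)^3$; so along $c$ the double cover becomes $W^2=(m^2-m+1)^3$, and setting $W'=W/(m^2-m+1)$ turns this into the conic $W'^2=m^2-m+1$. That conic has the rational point $(m,W')=(0,1)$, hence is isomorphic to $\p^1$; this is the curve $C$, which is therefore rational and irreducible (as $m^2-m+1$ is not a square) and dominates $D$ with degree $2$. Its $\sigma_6$-invariance is then immediate: $F_D$ is cyclic in $X,Y,Z$, so $D$ is invariant under $[X:Y:Z]\mapsto[Y:Z:X]$, and by Lemma~\ref{Lemm:M6S6}(ii) the automorphism $\sigma_6$ of $\M_2(6)$, namely $[W:X:Y:Z]\mapsto[-W:Y:Z:X]$, lies over this permutation of $\p^2$, so $\sigma_6(C)=C$.

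For the last assertion I would use Lemma~\ref{Lemm:M6S6}(iii): the endomorphism $f$ attached to $[W:X:Y:Z]\in\M_2(6)$ has $a_0=a_3=1$ and $a_5=0$, so its fixed points are the roots of the binary cubic $u^3+(a_4-1)u^2v-a_1uv^2-a_2v^3$. Restricting $a_1,a_2,a_4$ to $C$ — that is, inserting $c(m)$ for $X,Y,Z$ and a value of $W$ with $W^2=(m^2-m+1)^3$, which makes $a_1,a_2,a_4$ rational functions of the parameter on $C$ — I expect this binary cubic to acquire a linear factor over the function field of $C$; equivalently, and more directly, one writes down an explicit rational fixed point $q\in\p^1$ as a rational function of the parameter and checks $f(q)=q$. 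Such a $q$ is automatically distinct from the six cycle points of $f$, since those have exact period $6$ while $q$ has period $1$, so every point of $C$ indeed parametrises a quadratic $f$ with a rational fixed point in addition to its rational $6$-cycle. The main obstacle is precisely this last step: the formulas for $a_1,a_2,a_4$ on $C$ are unwieldy, so pinning down the rational linear factor of the fixed-point cubic (equivalently, guessing $q$) requires either a trick or a computer-algebra search; by contrast the reduction $W^2=(m^2-m+1)^3$, the other place where a genuine simplification is needed, is tamed by the identity $F_3\equiv 4(X+Y)(Y+Z)(Z+X)$ on $D$.
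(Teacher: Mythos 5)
Your treatment of the first three assertions is correct and follows essentially the same route as the paper: verify that $c$ parametrises the cubic and is inverted by projection from the node $[1:1:1]$, pull back along $W^2F_3=F_5$, and observe that the function field of the preimage is $\Q(m)$ extended by a square root of $F_5/F_3=(m^2-m+1)^3$, i.e.\ the rational conic $j^2=m^2-m+1$; the invariance of $C$ under $\sigma_6$ because $\sigma_6$ covers the cyclic permutation $[X:Y:Z]\mapsto[Y:Z:X]$ is also the paper's argument (stated there via $\sigma_6^2$ and $\sigma_6^3$ separately). Your identity $F_3\equiv 4(X+Y)(Y+Z)(Z+X)$ modulo the cubic is a nice streamlining, and your values $F_3(c(m))=-32m^3(m-1)^3$, $F_5(c(m))=-32m^3(m-1)^3(m^2-m+1)^3$ check out numerically (the paper's constant $-4$ differs only by a square factor, which is all that matters).

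However, the final assertion — that \emph{every} point of $C$ parametrises an $f$ with a rational fixed point — is not proved in your proposal, and you say so yourself: you only ``expect'' the fixed-point cubic $u^3+(a_4-1)u^2v-a_1uv^2-a_2v^3$ to acquire a linear factor over $\Q(C)$, and you defer the identification of that factor to ``a trick or a computer-algebra search.'' This is a genuine gap, not a routine verification left to the reader: there is no a priori reason for a cubic over the function field of $C$ to have a rational root, and this fact is precisely the substantive content of the claim (it is what produces the seventh rational periodic point advertised in the introduction). The paper settles it by explicit computation: it parametrises the conic by $(j,m)=\bigl(\frac{p^2+p+1}{1-p^2},\frac{2p+1}{1-p^2}\bigr)$, applies an automorphism of $\p^1$ placing $x_1,x_3,x_5$ at $\infty,0,1$, writes out the resulting quadratic map $f$ in terms of $p$, and exhibits the fixed point $x_0=[1:p+1]$. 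Until you produce such an explicit fixed point (or the rational linear factor of the fixed-point cubic) as a function of the parameter on $C$, the last sentence of the lemma remains unestablished; the rest of your argument does not imply it.
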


\begin{proof} We check that the coordinates of $c$ are relatively prime
cubic polynomials, whence its image is a rational cubic curve, and that
these coordinates satisfy the cubic equation
$X^3 + Y^3 + Z^3 = X^2 Y + Y^2 Z + Z^2 \kern-.05ex X$.
This proves that $c$ is birational to its image,
and thus that our cubic is a rational curve.
A rational cubic curve in $\p^2$ has only one singularity,
and since ours has a node at $[1:1:1]$ there can be no other singularities.
(The parametrisation $c$ was obtained in the usual way by projecting from
this node.)  We may identify the function field of the cubic with $\Q(m)$.
Substituting the coordinates of $c$ into $F_3$ and $F_5$ yields
$-4(m^2-m)^3$ and $-4(m^2-m)^3(m^2-m+1)^3$ respectively.
Thus adjoining a square root of $F_5/F_3$ yields the quadratic
extension of $\Q(m)$ generated by a square root of $m^2-m+1$.
The conic $j^2 = m^2-m+1$ is rational (e.g.\ because it has
the rational point $(m,j)=(0,1)$), so this function field
is rational as well.  The cubic $X^3 + Y^3 + Z^3 - (X^2 Y + Y^2 Z + Z^2 X)$
is invariant under cyclic permutations of $X,Y,Z$\/ (which act on
the projective \hbox{$m$-line} by the projective linear transformations
taking $m$ to $1/(1-m)$ and $(m-1)/m$); hence $\sigma_6^2$ acts on~$C$.
Also $\sigma_6^3$ acts because it fixes $X,Y,Z$\/ and takes $W$\/ to $-W$,
which fixes $m$ and takes $j$ to $-j$ in $j^2 = m^2-m+1$.  This proves that
$\sigma_6$ takes $C$\/ to itself.

Choosing a parametrisation of the conic given by
$$(j,m) = \left(\frac{p^2+p+1}{1-p^2}, \frac{2p+1}{1-p^2}\right),
$$ and putting the first, third, and fifth points of the cycle at
$\infty$, $0$, and $1$ respectively (see Remark~\ref{Rem:ChangeOrder}),
we find that the generic quadratic endomorphism $f$\/ parametrised by~$C$\/
has the \hbox{$6$-cycle}
\begin{eqnarray*}
x_1 = [1:0], && x_2 = [p^3+5p^2+2p+1 : (2p+1)(p^3+p^2+1)], \\
x_3 = [0:1], && x_4 = [(p+2)(p^3-p^2-2p-1) : 2(p-1)(p+1)^2(2p+1)], \\
x_5 = [1:1], && x_6 = [2(p+2)(2p+1) : (p+1)(p^3+p^2+4p+3)].
\end{eqnarray*}
The quadratic map that realises the above cycle $x_1,\ldots,x_6$ is
$$
[u:v]\mapsto
  [(2p+1)(p^3+p^2+1)(u-\lambda_1 v)(u-\lambda_2v) :
  (p^5+5p^2+2p+1)(u-\lambda_3v)(u-\lambda_4 v)],
$$
where
$$
\begin{array}{rclrcl}
\vphantom{\Big)}\lambda_1&=&\frac{p^3+5p^2+2p+1}{(1+2p)(p^3+p^2+1)},&
\lambda_2&=&\frac{(p+2)^2(p^3-p^2-2p-1)^2}{(p^3+5p^2+2p+1)(p^3+p^2+4p+3)(p+1)^2},\\
\vphantom{\Big)}\lambda_3&=&\frac{2(p+2)(1+2p)}{(p+1)(p^3+p^2+4p+3)},&
\lambda_4&=&\frac{(p^2-1)(p^3+5p^2+2p+1)(p^3-p^2-2p-1)}{(2p+1)^2(p^3+p^2+1)^2}.
\end{array}
$$
The point $x_0 = [1:p+1]$ is fixed by $f$.
[We refrain from exhibiting the coefficients of~$f$\/ itself,
which are polynomials of degree~$11$, $12$, and $13$ in~$p$.
However, we include a machine-readable formula for~$f$\/
in a comment line of the \LaTeX\ source
following the displayed formula for the $\lambda_i$,
so that the reader may copy the formula for~$f$\/ from the arXiv preprint
and check the calculation or build on it.]
\end{proof}

\begin{lemma}\label{XYZ}
The sero locus on $\M_2(6)$ of $XY\!Z$ is the union of
three isomorphic elliptic curves, which contain infinitely many
rational points.
\end{lemma}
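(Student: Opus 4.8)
The plan is to cut $S_6$ with the three hyperplanes $X=0$, $Y=0$, $Z=0$ and to recognise each of the resulting curves as an elliptic curve of positive Mordell--Weil rank. First I would reduce to a single curve: since $F_3$ and $F_5$ are invariant under the cyclic permutation $X\mapsto Y\mapsto Z\mapsto X$, and $\sigma_6\colon[W:X:Y:Z]\mapsto[-W:Y:Z:X]$ (an automorphism defined over $\Q$) permutes the three hyperplanes cyclically, the curves $C_X=S_6\cap\{X=0\}$, $C_Y=S_6\cap\{Y=0\}$ and $C_Z=S_6\cap\{Z=0\}$ are isomorphic to one another over $\Q$. So it suffices to study $C_X$, which, on setting $X=0$ in $W^2F_3=F_5$, is the plane quintic
$$C_X\colon\quad W^2\bigl((Y+Z)^3+YZ^2\bigr)=Y^3Z^2\ \subset\ \p^2_{[W:Y:Z]}.$$

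Next I would produce an explicit Weierstrass model for $C_X$. In the chart $Y=1$, with $w=W$ and $z=Z$, the equation reads $w^2(z^3+4z^2+3z+1)=z^2$; multiplying through by $z^3+4z^2+3z+1$ and setting $s=w(z^3+4z^2+3z+1)/z$ turns it into
$$s^2=z^3+4z^2+3z+1,$$
with inverse substitution $w=sz/(z^3+4z^2+3z+1)$, so $C_X$ is birational over $\Q$ to the curve $E\colon s^2=z^3+4z^2+3z+1$. The cubic $z^3+4z^2+3z+1$ has discriminant $-31\neq0$, so $E$ is smooth; together with the rational point $(z,s)=(0,1)$ this makes $E$ an elliptic curve over $\Q$. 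I would also record that $C_X$ is geometrically irreducible: writing $W^2A-B$ with $A=(Y+Z)^3+YZ^2$ and $B=Y^3Z^2$, one has $\gcd(A,B)=1$ and $AB$ is not a square in $\overline{\Q}[Y,Z]$ (it is divisible by $Y^3$ but not $Y^4$), so this quadratic in $W$ is irreducible over $\overline{\Q}$. Hence the zero locus of $XYZ$ on $S_6$ has exactly the three irreducible components $C_X,C_Y,C_Z$, none of which is among the $L_i$ or $\Cal_j$ of Lemma~\ref{Lemm:M6S6}; therefore $C_X\cap\M_2(6)$ is the complement of a finite set in $C_X$.

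It remains to see that $E(\Q)$ is infinite, for which I would check that $P=(0,1)$ has infinite order. The tangent to $E$ at $P$ has slope $m=\tfrac32$, so $z(2P)=m^2-4-2z(P)=-\tfrac74\notin\Z$ (and $s(2P)=\tfrac{13}{8}$). Since $E$ is given by a monic integral cubic, the Nagell--Lutz theorem forces every torsion point to have integral coordinates, so $2P$, and hence $P$, is non-torsion. Thus $E(\Q)$, and therefore $C_X(\Q)$, is infinite; as $C_X\cap\M_2(6)$ omits only finitely many points of $C_X$, the curve $C_X\subset\M_2(6)$ carries infinitely many rational points, and the same holds for $C_Y$ and $C_Z$ by the $\sigma_6$-symmetry.

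The only genuine obstacle is establishing the positivity of the Mordell--Weil rank of $E$: everything else is routine manipulation of the explicit equations, but one must actually exhibit a rational point of infinite order and certify it, for which the double-a-point plus Nagell--Lutz argument above is the quickest route. A minor point one should not skip is checking that $C_X$ is irreducible and not swallowed by the boundary $S_6\setminus\M_2(6)$, so that the phrase ``union of three isomorphic elliptic curves'' is literally correct and the infinitely many rational points really lie on $\M_2(6)$.
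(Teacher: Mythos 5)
Your proposal is correct and follows essentially the same route as the paper: reduce by the coordinate-permuting symmetry to a single hyperplane section, identify it birationally with the elliptic curve $y^2=x^3+4x^2+3x+1$, show $(0,1)$ is non-torsion by doubling to $(-7/4,\pm 13/8)$ and invoking Nagell--Lutz, and note that the curve meets the boundary $S_6\setminus\M_2(6)$ in only finitely many points. The only cosmetic differences are that you slice along $X=0$ rather than $Z=0$, use a slightly different birational substitution, and spell out the irreducibility of the quintic section, which the paper leaves implicit.
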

\begin{proof}
The 3-cycle $[W:X:Y:Z]\mapsto [W:Y:Z:X]$ permutes the factors
$X,Y,Z$\/ of $XY\!Z$, so it is enough to consider only curve defined by $Z=0$.
Substituting $Z=0$ in the equation of $S_6$, we obtain
$$
W^2(X^3+3X^2Y+4XY^2+Y^3)-Y^2X^3=0.
$$
This is a singular plane curve of degree $5$, birational to the
smooth elliptic curve $\rmE \subset \A^2$ given by
\beq
\label{el1}
y^2 = x^3 + 4x^2 + 3x + 1 = (x+1)^3 + x^2
\eeq
via the map $[W:X:Y]\dasharrow (\frac{Y}{X},\frac{Y}{W})$
whose inverse is $(x,y)\mapsto [\frac{1}{y}:\frac{1}{x}:1]$.

We show that $\rmE$ has positive \MW\ rank by showing that
the points $(0,\pm 1)\in \rmE$ have infinite order.
Using to the duplication formula in \cite[p.31]{ST} we calculate that
the $x$~coordinate of $2(x,y)$ is 
$$
\frac{x^4-6 x^2-8 x-7}{4 x^3+16 x^2+12 x+4}.
$$
Note that the $x$ coordinate of $2(0,1)$ is $-7/4$.  Hence
$(-7/4, 13/8)\in \rmE$. The Nagell-Lutz Theorem (e.g.\ see \cite[p.56]{ST})
implies that $(-7/4, 13/8)$ is not a torsion point. According to
Lemma~\ref{Lemm:M6S6}, the curve $\rmE$ meets the boundary
$S_6\setminus \M_2(6)$ in finitely many points; therefore
infinitely rational points of~$\rmE$ belong to $\M_2(6)$.
\end{proof}

\begin{remark}
In fact the curve $\rmE$ has a simple enough equation that we readily
determine the structure of its \MW\ group $\rmE(\Q)$ using
\hbox{$2$-descent} as implemented in Cremona's program \textsc{mwrank}:
$\rmE(\Q)$ is the direct sum of the 3-element torsion group generated by
$(-1,1)$ with the infinite cyclic group generated by $(0,1)$.
The curve $\rmE$ has conductor $124$, small enough that it already
appeared in Tingley's ``Antwerp'' tables \cite{MFIV} of curves
of conductor at most $200$, where $\rmE$ is named 124B (see page~97);
Cremona's label for the curve is 124A1 \cite[p.102]{Cremona}.
(Both sources give the standard minimal equation
$y^2 = x^3 + x^2 - 2x + 1$ for $\rmE$, obtained from~(\ref{el1})
by translating $x$ by~$1$.)

\end{remark}

Lemma \ref{XYZ} yields the existence of infinitely many classes
of endomorphisms of $\Proj^1$ defined over $\Q$ that admit a rational
periodic point of primitive period~$6$.  We next give an example
by using the previous arguments.
\begin{example}
The point $(-7/4, 13/8)$ is a point of the elliptic curve $\rmE$ defined
in~(\ref{el1}). Using the birational map in the proof of Lemma~\ref{XYZ}
we see that the point $(x,y)=(-7/4, 13/8)$ is sent to
the point $[W:X:Y:Z]=[8/13:-4/7:1:0]$. Applying the map $\phi^{-1}$ of
Lemma \ref{Lemm:M6S6}, we obtain the point $(91/19,49/13,-98/19)\in \A^3$.
Finally we apply Lemma \ref{Lemm:ProjectionAk}, finding
the endomorphism
$$
f[u:v] = [(19u+98v)(133u-441v) : 19u(133u-529v)],
$$
which admits the $6$-cycle
$$
[0:1]\mapsto[1:0]\mapsto[1:1]\mapsto[91:19]\mapsto[49:13]\mapsto[-98:19]\mapsto[0:1].
$$
\end{example}
Apart from the elliptic curves in $S_6$ corresponding to $XY\!Z=0$,
there  are other elliptic curves which can be found using the
special form of the equation of $S_6$. The following lemma shows
that none of these curves provides a rational point of $\M_2(6)$.

\begin{lemma}
The intersection with $S_6$ of the hyperplanes $W=\pm(X+Y+Z)$ is the union
of two conics, contained in $S_6\setminus \M_2(6)$, and two isomorphic
elliptic curves, which contain only finitely many rational points,
all contained in $S_6\setminus \M_2(6)$ too.
\end{lemma}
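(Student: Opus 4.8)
The plan is to substitute $W=\varepsilon(X+Y+Z)$ with $\varepsilon\in\{1,-1\}$ into the equation $W^2F_3(X,Y,Z)=F_5(X,Y,Z)$ defining $S_6$. Since $W$ occurs only through $W^2$, both signs lead to the same plane quintic
$$
P(X,Y,Z):=(X+Y+Z)^2F_3(X,Y,Z)-F_5(X,Y,Z)=0
$$
in the plane with coordinates $[X:Y:Z]$, and each of the hyperplanes $\{W=X+Y+Z\}$ and $\{W=-(X+Y+Z)\}$ of $\p^3$ is carried isomorphically onto that plane by $[W:X:Y:Z]\mapsto[X:Y:Z]$. Hence $S_6\cap\{W=X+Y+Z\}$ and $S_6\cap\{W=-(X+Y+Z)\}$ are both isomorphic to $\{P=0\}$, and in particular to each other. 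First I would expand $P$ and check that the quadratic form $q:=X^2+Y^2+Z^2+3(XY+XZ+YZ)$ divides it, writing $P=q\cdot G$ with $G$ a cubic form. The locus $q=0$ on the hyperplane $\{W=X+Y+Z\}$ (resp. $\{W=-(X+Y+Z)\}$) is exactly the conic $\Cal_1$ (resp. $\Cal_2$) of Lemma~\ref{Lemm:M6S6}, which is shown there to be contained in $S_6\setminus\M_2(6)$; this yields the two conics.

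Next I would study the residual cubic $\{G=0\}\subset\p^2$. Computing the partial derivatives $G_X,G_Y,G_Z$ and verifying that they have no common zero over $\overline{\Q}$ shows that $\{G=0\}$ is a smooth plane cubic, hence irreducible of genus~$1$; exhibiting a rational point on it --- found by a short search, or among its intersections with the $23$ boundary curves of $S_6$ --- turns it into an elliptic curve $\rmE'$, which I would then put into Weierstrass form in the usual way. The curves $\{W=X+Y+Z,\ G=0\}$ and $\{W=-(X+Y+Z),\ G=0\}$ are then the two elliptic curves of the statement, each isomorphic to $\rmE'$.

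It remains to show that $\rmE'(\Q)$ is finite and consists of boundary points. For the finiteness I would perform a $2$-descent on $\rmE'$ (for instance with \textsc{mwrank}), showing that its \MW\ rank is $0$; then $\rmE'(\Q)$ is just its torsion subgroup, which is finite and which Nagell--Lutz (or the descent itself) determines explicitly. Finally, for each of these finitely many rational points $[X:Y:Z]$, I would check via the criterion of Lemma~\ref{Lemm:M6S6}$(i)$ (equivalently Proposition~\ref{Prop:IntroDetailsM6}$(ii)$) that the corresponding point $[\varepsilon(X+Y+Z):X:Y:Z]$ of $S_6$ satisfies $W^2=XY+YZ+XZ$ or has two of $W^2,X^2,Y^2,Z^2$ equal, i.e. lies in $S_6\setminus\M_2(6)$; this is a finite verification.

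The main obstacle is the rank computation: the factorisation $P=q\cdot G$ and the smoothness of $\{G=0\}$ are direct polynomial manipulations, and the check that the rational points lie on the boundary is a finite case analysis, but the claim that the two curves carry \emph{only finitely many} rational points rests entirely on showing that the \MW\ rank of $\rmE'$ is $0$. A secondary point is producing the rational point needed to regard $\{G=0\}$ as an elliptic curve and to initialise the descent; if none is obvious, intersecting $\{G=0\}$ with lines through the known rational points of $S_6$, or with the $L_i$ and $\Cal_j$, supplies the candidates.
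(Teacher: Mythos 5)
Your proposal is correct and follows the same geometric skeleton as the paper: substitute $W=\pm(X+Y+Z)$ (the paper handles the sign via the automorphism $[W:X:Y:Z]\mapsto[-W:X:Y:Z]$, you via the observation that $W$ enters only through $W^2$ and that projection to $[X:Y:Z]$ identifies the two hyperplane sections), factor the resulting plane quintic as $q\cdot G$ with $q=X^2+Y^2+Z^2+3(XY+XZ+YZ)$ giving the conics $\Cal_1,\Cal_2$, and identify the residual cubic $G=(X+Y+Z)^3-X^2Y-Y^2Z-XZ^2$ as a smooth genus-one curve. Where you diverge is the decisive arithmetic step. You delegate the finiteness of rational points to a $2$-descent (\textsc{mwrank}) establishing \MW\ rank $0$, followed by a Nagell--Lutz determination of torsion and a finite boundary check against the criterion of Lemma~\ref{Lemm:M6S6}$(i)$; the paper instead maps $G=0$ by an explicit birational transformation to the Weierstrass curve $y^2z=4x^3+z^3$ (conductor $27$) and proves $\mathcal{E}(\Q)=\{[0:1:0],[0:\pm1:1]\}$ by hand: writing $4x^3=z(y-z)(y+z)$ and setting $(y,z)=(r+s,r-s)$ gives $x^3=rs(s-r)$ with $r,s,s-r$ pairwise coprime, so each is a cube and the case $n=3$ of Fermat forces $x=0$; since all these points have $x z=0$ they lie off $\M_2(6)$. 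The paper explicitly notes that your route (descent or table lookup) would also work, so your argument is sound; what the paper's version buys is a self-contained, software-free proof that pins down the full set of rational points, and a Weierstrass model in which the boundary check is immediate (all rational points satisfy $x=0$), whereas your version trades that elegance for a routine computation, with the residual tasks (exhibiting a rational point on $G=0$ to anchor the Weierstrass form, and the point-by-point boundary verification) being straightforward but left as finite checks.
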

\begin{proof}
Thanks to the automorphism $[W:X:Y:Z]\mapsto [-W:X:Y:Z]$, it is
sufficient to study the curve defined by $W=X+Y+Z$. Replacing $W=X+Y+Z$
in the equation of $S_6$ yields the following reducible polynomial
of degree $5$:
$$
\left(X^2+Y^2+Z^2+3(XY+XZ+Y\!Z)\right)
\left((X+Y+Z)^3-X^2Y-Y^2Z-XZ^2)\right).
$$

The first factor corresponds to the conic $\Cal_1\subset S_6\setminus \M_2(6)$
(Lemma \ref{Lemm:M6S6}) and the second yields a smooth plane cubic
$E\subset S_6$ birational to the elliptic curve $\mathcal{E}\subset \p^2$
given in Weierstrass form by
$$
y^2z=4x^3+z^3
$$
via the birational transformation $\psi\in \Bir(\p^2)$ given by
$$
\psi\colon [X:Y:Z]\dashmapsto
 \left[(-X-Y)(Y+Z): -X^2-2YZ-(X+Y+Z)^2 :(Y+Z)^2\right],
$$
whose inverse is given by
$$
[x:y:z]\dashmapsto
 \left[-2 x^2-2 x z+y z+z^2:-2 x^2+2 x z-y z-z^2:{2 x^2+2x z+y z+z^2}\right].
$$

Note that the image of $\psi(E\setminus \M_2(6))$ is contained
in open set where $x z\neq 0$. The result will then follow from
the fact that $\mathcal{E}(\Q)=\{[0:1:0],[0:1:1],[0,-1,1]\}$,
which we prove next.  We could again do this using \textsc{mwrank},
or by finding the curve in tables (the conductor is $27$),
but here it turns out that the result is much older,
reducing to the case $n=3$ of Fermat's Last Theorem.

It is clear that the three points are in $\mathcal{E}$.
Conversely, let $[x:y:z]\in \mathcal{E}$ be a rational point.
Write the equation of $\mathcal{E}$ as $4x^3=z(y-z)(y+z)$,
and make the linear change of coordinates $(y,z)=(r+s,r-s)$.
This yields $z(y-z)(y+z) = 4rs(s-r)$ and $x^3 = rs(s-r)$,
and we are to show that $x=0$.
We may assume that $x,y,z$ are integers with no common factor.
Then $\gcd(r,s)=1$ (since any prime factor of both $r$ and $s$ 
would divide $x^3$ and would thus also be a factor of~$x$).
Therefore $r,s-r,s$ are pairwise coprime, and if their product
is a nonzero cube then each of them is a cube individually.
But then the cube roots, call them $\alpha,\beta,\gamma$, satisfy
$\alpha^3+\beta^3=\gamma^3$.  Hence by the $n=3$ case of Fermat
$\alpha\beta\gamma=0$, and we are done.
\end{proof}

There may be yet further rational curves to be found:
we searched for rational points on $\M_2(6)$ using the
\hbox{$p$-adic} variation of the technique of~\cite{NDE:ANTS4},
finding more than $100$ orbits under the action of $\langle\sigma_6\rangle$
that are not accounted for by the known rational and elliptic curves,
such as $[W:X:Y:Z] = [-46572 : 20403 : 35913 : 16685]$
and $[-75523 : 54607 : 72443 : 62257]$.

We conclude this section with a curiosity involving the curves
$F_3(X,Y,Z)=0$ and $F_5(X,Y,Z)=0$, which lift to points of $S_6$
fixed by $\sigma_6^3$.  These curves have genus $1$ and $2$ respectively,
and do not yield any rational points on $\M_2(6)$.  But they are
birational with the modular curves $X_1(14)$ and $X_1(18)$
which parametrise elliptic curves with a $14$- or \hbox{$18$-torsion}
point respectively.  Could one use the modular structure to explain
these curves' appearance on $S_6$?

\subsection{$S$--integral points of $M_2(6)$ }\label{sip}
In this section we consider the $S$--integral points of $M_2(6)$
viewed as $S_6\setminus D$ where $D$ is the effective ample
divisor $D=\sum_{1\leq i\leq 9} L_i+\sum_{1\leq i\leq 14} \Cal_i$
and the lines $L_i$ and the conics $\Cal_i$ are the one defined
in Lemma \ref{Lemm:M6S6}.
We shall apply the so called $S$-unit Equation Theorem, but before
to state it we have to set some notation.

Let $K$ be a number field and $S$ a finite set of places of $K$
containing all the archimedean ones. The set of $S$--integers is
the following one:
$$\mathcal{O}_S\coloneqq \{x\in K\mid |x|_v\leq 1\ \text{for any
$v\notin S$}\}$$
and we denote by $\mathcal{O}_S^*$ its group of units
$$\mathcal{O}_S^*\coloneqq \{x\in K\mid |x|_v= 1\ \text{for any
$v\notin S$}\}$$
which elements are called $S$--units. (See for example \cite{BoGu}
for more information about these objects.)

We shall use the following classical result:

\begin{theorem}\label{sunit} Let $K$, $S$ and $\mathcal{O}_S^*$
be as above. Let $a$ and $b$ be nonzero fixed elements of $K$.
Then the equation
$$ax+by=1$$ has only finitely many solutions $(x,y)\in (\mathcal{O}_S^*)^2$.
\end{theorem}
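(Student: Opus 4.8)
The plan is to prove the $S$-unit equation theorem (Theorem~\ref{sunit}) by reducing it to the height machinery from Diophantine approximation, specifically to the Subspace Theorem of Schmidt and Schlickewei. First I would normalise: after dividing the equation $ax+by=1$ by the relevant constants, and after replacing $K$ by a finite extension and $S$ by a larger finite set of places (which only makes the problem harder, so loses no generality), one may assume $a=b=1$ and moreover that $\mathcal{O}_S^*$ has been enlarged to contain $a$ and $b$ themselves; then a solution $(x,y)$ of $ax+by=1$ corresponds, after rescaling, to a solution of $u+v=1$ with $u,v\in\mathcal{O}_S^*$. So it suffices to bound the number of solutions of $u+v=1$ in $S$-units.

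The key step is to apply the Subspace Theorem to the points $[u:v:1]\in\Proj^2$ indexed by solutions. For each place $v\in S$ one chooses an appropriate set of linearly independent linear forms among $X_0,X_1,X_2,X_0+X_1-X_2$ so that the product over $v\in S$ of the corresponding products of $\lvert L_i([u:v:1])\rvert_v$, suitably normalised, is small relative to the height $H([u:v:1])$ --- the point being that $u,v$ being $S$-units forces the contributions away from $S$ to vanish, so the whole height is concentrated on the $S$-part, while the linear relation $u+v-1=0$ makes one of the chosen forms abnormally small at each place. This yields an inequality of the shape $\prod \lvert L_i\rvert_v \ll H([u:v:1])^{-\delta}$ for some $\delta>0$, which by the Subspace Theorem confines all but finitely many solution points to a finite union of proper linear subspaces of $\Proj^2$. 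Each such line $\alpha X_0+\beta X_1+\gamma X_2=0$ meets the curve $u+v=1$ in a single point unless the line is $X_0+X_1-X_2=0$ itself, in which case it is a genuine sub-equation $u'+v'=1$ of lower "rank" to which one applies the same argument inductively (descending on the rank of the subgroup of $K^*$ generated by $u,v$, which is finite since $\mathcal{O}_S^*$ is finitely generated by Dirichlet's $S$-unit theorem). This terminates, giving finiteness.

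The main obstacle --- and the reason this is quoted as a "classical result" rather than reproved --- is that a self-contained proof requires the full strength of the Schmidt--Schlickewei Subspace Theorem, which is itself a deep theorem whose proof (via Roth's method, Siegel's lemma, and the geometry of numbers) is far beyond the scope of this paper. I would therefore not attempt to reprove the Subspace Theorem; instead I would cite it and carefully carry out the reduction above, paying attention only to the normalisations (enlarging $K$ and $S$, absorbing $a,b$ into $\mathcal{O}_S^*$) and to the verification of the height inequality that feeds the Subspace Theorem. The only genuinely delicate bookkeeping is choosing, at each $v\in S$, which of the four linear forms to include so as to both keep them linearly independent and capture the smallness coming from $u+v=1$; this is standard but must be done uniformly over $v\in S$. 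Given that the paper explicitly labels this a "classical result", the honest write-up is simply to state it with a reference (e.g.\ to the treatments in \cite{BoGu} or the standard sources on the $S$-unit equation) and move on, as the authors do.
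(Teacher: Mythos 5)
The paper does not actually prove Theorem~\ref{sunit}: it states it and attributes it to Mahler and Siegel, pointing to Beukers--Schlickewei \cite{BeSc} for a quantitative version, exactly as you recommend in your final paragraph. So your bottom-line decision (cite the classical result, after the easy normalisation enlarging $S$ so that $a,b\in\mathcal{O}_S^*$ and setting $u=ax$, $v=by$) coincides with what the authors do, and that normalisation step is correct.

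The Subspace Theorem sketch you offer as a backup, however, has a genuine flaw in its key step. You propose to work with the points $[u:v:1]\in\Proj^2$ and to include the form $X_0+X_1-X_2$ among the chosen linear forms because the relation $u+v=1$ makes it ``abnormally small''. But that form is not merely small: it vanishes identically on every solution point, so the double product in the Subspace inequality is exactly $0$, the inequality holds trivially, and the conclusion (the points lie in finitely many proper subspaces) is vacuous --- they all lie in the hyperplane $X_0+X_1-X_2=0$ to begin with, and nothing forces finiteness. The correct classical reduction works instead in $\Proj^1$ with the point $(u:v)$ and forms chosen among $X_0$, $X_1$, $X_0+X_1$, where $X_0+X_1$ evaluates to $1\neq 0$; there the proper subspaces are single points, so the Subspace (or even Roth-type) conclusion gives finiteness at once, with no need for your inductive escape. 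Relatedly, the ``descent on the rank of the subgroup generated by $u,v$'' is not a valid induction: that rank does not decrease, and in the standard proof of the $n$-term unit equation the induction is on the number of summands (to handle vanishing subsums), which plays no role in the two-variable case. None of this affects the paper, since the result is only cited, but as written your sketch would not compile into a proof.
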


This result, due to Mahler, was proved in some less general form
also by Siegel. Theorem \ref{sunit} can be viewed as a particular
case of the result proved by Beukers and Schlickewei in \cite{BeSc}
that gives also a bound for the number of solutions.

We recall briefly the notion of $S$--integral points. Let $X\subset
\mathbb{A}^n$ be an affine variety defined over a number field
$K$ with $\mathcal{O}$ its ring of algebraic integers. Let $K[X]$
be the ring of regular functions on $X$. Recall that $K[X]$ is
a quotient of the polynomial ring $K[x_1,x_2,\ldots,x_n]$. Denote
by $\mathcal{O}_S[X]$ the image in this quotient of the ring
$\mathcal{O}_S[x_1,x_2,\ldots,x_n]$.  If $P=(p_1,p_2,\ldots, p_n)$
is a point of $X$ whose coordinates are all $S$--integers,
then $P$\/ defines a morphism of specialisation
$\phi_P\colon \mathcal{O}_S[X] \to \mathcal{O}_S$.
It is clear that also the converse holds: to any such morphism
corresponds a point $P\in X$ with $S$--integral coordinates.

Let $\tilde{X}$ be a projective variety, $D$\/ an effective ample divisor
on $\tilde{X}$, and $X = \tilde{X}\setminus D$. Also, by considering
an embedding $\tilde{X}\to \p^n$ associated to a suitable
multiple of~$D$, we can view $D$\/ as the intersection of $\tilde{X}$
with the hyperplane $H$ at infinity. By choosing an affine coordinate
system for $\p^n\setminus H$, we can consider the ring of regular
functions $\mathcal{O}[X]$. Note that the choice of the ring $\mathcal{O}[X]$
gives an integer model for $X$. We can define the set $X(\mathcal{O}_S)$
of the $S$--integral points of $X$ as the set of morphisms of algebras
$\mathcal{O}[X]\to \mathcal{O}_S$. There is a bijection between
this set and the points of $X$ which reduction modulo $p$ are not
in $D$.

For example we can see that Theorem \ref{sunit} implies that there are
only finitely many $S$--integral points in $\p^1\setminus\{0,\infty,1\}$.
Thus consider the ring of regular functions
$$
\mathcal{O}[\p^1\setminus\{0,\infty,1\}]
= \mathcal{O}\left[T,T^{-1},(T-1)^{-1}\right],
$$
to deduce that $S$--integral points of $\p^1\setminus\{0,\infty,1\}$
correspond to morphisms from
$\mathcal{O}_S\left[T,T^{-1},(T-1)^{-1}\right]$ to $\mathcal{O}_S$.
But such a morphism is the specialisation of~$T$\/
to an $S$--unit $u$ such that $1-u$ is an $S$--unit too.
Therefore if we write $v=1-u$ we obtain the equation $u+v=1$; by Theorem
\ref{sunit} it follows that there are only finitely many possible
values for the $S$--unit $u$.

As a direct application of the previous arguments we prove the
following result:

\begin{proposition}\label{sint}
Let $K$ and $S$ be as above.  Let $D$ be the effective divisor sum
of the lines and the conics defined in Lemma~$\ref{Lemm:M6S6}$.
Then the set of $S$--integral points of $M_2(6)=S_6\setminus D$
is finite.
\end{proposition}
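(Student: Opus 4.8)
The plan is to reduce the statement to the finiteness of $S$--integral points of $\p^1\setminus\{0,1,\infty\}$ already recorded above as a consequence of Theorem~\ref{sunit}. The point is that $M_2(6)$ carries three natural maps to $\p^1\setminus\{0,1,\infty\}$, coming from the three "free" points of the marked $6$--cycle, and these maps together are injective.

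Concretely, I would proceed as follows. By Lemma~\ref{Lemm:ProjectionAk} with $k=n-3=3$, the surface $M_2(6)$ is isomorphic to a locally closed (and, by Corollary~\ref{Coro:Generaltype}, affine) subvariety of $\A^3$, where a point $(x_1,x_2,x_3)$ corresponds to the class of an element $(f,[0:1],[1:0],[1:1],[x_1:1],[x_2:1],[x_3:1])$ of $\widetilde{\M}_2(6)$; by definition of $\widetilde{\M}_2(6)$ these six points are pairwise distinct, so on $M_2(6)$ each of $x_1,x_2,x_3$ and each of $x_1-1,x_2-1,x_3-1$ is an everywhere nonvanishing regular function (this is also noted in the proof of Lemma~\ref{Lemm:ProjectionAk}). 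Hence the coordinate projections give morphisms $x_i\colon M_2(6)\to\p^1\setminus\{0,1,\infty\}$, and $(x_1,x_2,x_3)$ is injective. After enlarging the fixed finite set $S$ by finitely many primes — which only enlarges the set of $S$--integral points, so that finiteness for the larger set implies it for the original one — the $x_i$ become morphisms over $\mathcal O_S$ for the relevant integral models, and therefore carry an $S$--integral point $P$ of $M_2(6)$ to $S$--integral points $x_i(P)$ of $\p^1\setminus\{0,1,\infty\}$. Writing $u=x_i(P)$, both $u$ and $1-u$ are then $S$--units with $u+(1-u)=1$, so Theorem~\ref{sunit} (with $a=b=1$) leaves only finitely many possibilities for each $x_i(P)$; since $(x_1,x_2,x_3)$ is injective on $M_2(6)$, there are only finitely many such $P$.

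The one step requiring care is the comparison in the previous paragraph between the integral model $S_6\setminus D$ appearing in the statement and the coordinate picture $M_2(6)\subset\A^3$: this is the standard fact that, up to enlarging $S$ by finitely many primes, the set of $S$--integral points of an affine variety does not depend on the chosen model. Here it suffices to observe, using the formula for $\varphi^{-1}$ in Lemma~\ref{Lemm:M6S6}, that $x_1,x_2,x_3$, their inverses, and the inverses of $x_1-1,x_2-1,x_3-1$ are all regular on $S_6\setminus D$ (their zero and polar loci on $S_6$ lie on the lines $L_i$ and conics $\Cal_i$ making up $D$, since the relevant factors $W+Y$, $X+Z$, $W-X$, $W-Z$, $W^2+XY+YZ+XZ$, \dots\ are among those cutting out $D$ on $S_6$ in Lemma~\ref{Lemm:M6S6}). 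Thus these functions lie in $\mathcal O_S[M_2(6)]$ once finitely many primes have been inverted, which is exactly what makes the above specialisation argument legitimate. No further input is needed.
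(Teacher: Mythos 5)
Your argument is correct and takes essentially the same route as the paper's proof: both send an $S$--integral point of $M_2(6)$ via $\varphi^{-1}$ (i.e.\ the coordinate functions $x_1,x_2,x_3$) to a triple of $S$--integral points of $\p^1\setminus\{0,1,\infty\}$, invoke Theorem~\ref{sunit}, and conclude from the injectivity of this map. Your additional care --- enlarging $S$ by finitely many primes and checking that $x_i$, $x_i^{-1}$, $(x_i-1)^{-1}$ are regular on $S_6\setminus D$ --- only makes explicit a model-comparison step that the paper leaves implicit.
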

\begin{proof}
We can consider $\p^1\times\p^1\times\p^1$ as the compactification
of $\A^3$ and consider the restriction to $M_2(6)$ of the rational
map $\Phi\colon \p^3\to\p^1\times\p^1\times\p^1$ obtained in the
canonical way from the map $\phi^{-1}$ defined in Lemma~$\ref{Lemm:M6S6}$.
The map $\Phi$  is an isomorphism from $M_2(6)$ to its image, which
is locally closed in $\p^1\times\p^1\times\p^1$. By Lemma \ref{Lemm:M6S6}
we see that each $S$--integral point is sent via the map $\Phi$
into a point $(x,y,z)\in \p^1\times\p^1\times\p^1$ where $x,y,z$
are $S$--integral points in $\p^1\setminus\{0,\infty,1\}$.
By the argument described before the present proposition,
there are finitely many such $S$--integral points. Now the proposition
follows from the fact that $\Phi$ is a one-to-one map. \end{proof}

\begin{remark}
Proposition \ref{sint} also follows from \cite[Theorem 1.2]{Ca}.
An $S$--integral point of $M_2(6)=S_6\setminus D$ corresponds to
an ($n+1$)-tuple
$$
(f,0,\infty,1,x,y,z) = (f,[0:1],[1:0],[1:1],[x:1],[y:1],[z:1]),
$$
where $x,y,z$ are $S$--units and $f$ is a quadratic map defined over~$K$\/
with good reduction outside~$S$. See \cite{Sil.2} or \cite{Ca} for the
definition of good reduction, but roughly speaking it means that
the homogeneous resultant of the two $p$--coprime polynomials defining
$f$ is a $p$--unit for any $p\notin S$.  In particular, to an $S$--integral
point of $M_2(6)$ corresponds a rational map $f$\/ defined over~$K$,
with good reduction outside~$S$, which admits a $K$--rational periodic
point of minimal period~6; and this set is finite by \cite[Theorem 1.2]{Ca}.
Now Proposition \ref{sint} follows from the previous
argument because for any point $[W:X:Y:Z]\in M_2(6)$ there exists
a unique $f$\/ that admits the cycle $([0:1],[1:0],[1:1],[x:1],[y:1],[z:1])$,
where $(x,y,z)=\phi^{-1} ([W:X:Y:Z])$ and the map $\phi^{-1}$
is the one defined in Lemma~\ref{Lemm:M6S6}.
\end{remark}


\end{document}